\providecommand{\tabularnewline}{\\}
\newenvironment{cellvarwidth}[1][t]
    {\begin{varwidth}[#1]{\linewidth}}
    {\@finalstrut\@arstrutbox\end{varwidth}}
\numberwithin{equation}{section}
\numberwithin{figure}{section}
\theoremstyle{plain}
\newtheorem{thm}{Theorem}[section]
\newtheorem{prop}[thm]{Proposition}
\newtheorem{maintheorem}{Theorem}[]
\begin{document}
\title{Poincar\'{e} Polynomials and Curvature Operators of Symmetric Spaces}
\author{Peter Petersen}
\address{University of California, Los Angeles, 520 Portola Plaza, CA, 90095}
\email{petersen@math.ucla.edu}
\author{James Stanfield}
 \address{Cluster of Excellence Mathematics M\"{u}nster, Orl\'{e}ans-Ring 10, 48149 M\"{u}nster, Germany}
 \email{james.stanfield@uni-muenster.de} 
\begin{abstract}
We compute explicit formulas for the curvature operators and Poincar\'e polynomials of all compact irreducible symmetric spaces. We can easily derive the Poincar\'e polynomials using quantum numbers, giving a formula that mirrors the known formula for the Euler characteristic. For the curvature operators, we show that their maximum eigenvalue is always bounded above by the Einstein constant, with equality attained precisely by the Hermitian symmetric spaces.
\end{abstract}

\subjclass[2000]{53C30, 53C35, 57T15}
\maketitle

\section{Introduction}

In this note we present an overview of symmetric spaces of compact type with an emphasis on showing how to calculate their Poincar\'e polynomials and
curvature operators. Although the Poincar\'e polynomials---in particular Betti numbers---can be found in the literature, it is perhaps not as straightforward as one might expect. Similarly, the curvature operators have been calculated in some cases, for instance on the Hermitian symmetric spaces \cite{Borel1960,CV}, but there remain gaps in the general setting. On the other hand, the eigenvalues for the curvature operator of the \emph{second kind} (that is, acting on symmetric tensors) were computed by Koiso on the irreducible symmetric spaces in \cite{Koiso1978}. To fill these gaps, we have compiled a complete list of Poincar\'e polynomials and curvature operators for each of the compact irreducible symmetric spaces. We have organized the spaces into five classes: Lie groups (see Subsection \ref{subsec:The-Groups}),
Grassmannians (see Subsection \ref{subsec:Grassmanian-Symmetric}),
spaces with simple isotropy group (see Subsection \ref{subsec:Simple-isotropy}),
Hermitian symmetric spaces (see Subsection \ref{subsec:Hermitian-Symmetric}),
and Wolf spaces (quaternionic-K\"ahler symmetric spaces) (see Subsection \ref{subsec:Wolf-spaces}). 

We consider compact simply-connected symmetric spaces: $M=\mathrm{G}/\mathrm{H}$, where
$\mathrm{G}$ is a compact semi-simple Lie group and $\mathrm{H}\subset \mathrm{G}$ is the identity component of the fixed point subgroup of an involutive automorphism of $\mathrm{G}$. On the Lie algebra $\mathfrak g$ of $\mathrm G$, the Killing form
$B_{\mathfrak{g}}$ is negative definite and we equip
$\mathrm{G}/\mathrm{H}$ with the $\mathrm{G}$-invariant submersion metric induced by $-B_{\mathfrak{g}}$. With this metric, $M$ is Einstein with Einstein constant $\frac12$. Moreover, the Lie algebra of Killing fields on $M$ is isomorphic to $\mathfrak{g}$. In particular, $\mathrm G$ acts almost-effectively on $M$.

Our first result addresses the nature of the eigenvalues of the curvature operator.

\begin{maintheorem}
\label{thm:CurvOp}
Any compact symmetric space has the property that the eigenvalues of the curvature operator lie in $[0,\frac{1}{2}]$. Moreover, if the space is irreducible there can be one, two, or three distinct nonzero eigenvalues that are all rational numbers. Finally, only the complex Grassmannians can have three such eigenvalues.

\end{maintheorem}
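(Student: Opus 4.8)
The plan is to reduce everything to the bracket map on the Cartan decomposition. Write $\mathfrak{g}=\mathfrak{h}\oplus\mathfrak{p}$ for the symmetric decomposition, identify $\mathfrak{p}$ with the tangent space at the base point, and recall that on a symmetric space $R(X,Y)Z=-[[X,Y],Z]$ for $X,Y,Z\in\mathfrak{p}$. First I would introduce the bracket map $\beta\colon\Lambda^{2}\mathfrak{p}\to\mathfrak{h}$, $\beta(X\wedge Y)=[X,Y]$, and verify using the invariance of $B_{\mathfrak{g}}$ that, with respect to the metric $-B_{\mathfrak{g}}$, the curvature operator is exactly $\mathcal{R}=\beta^{*}\beta$. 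In particular $\mathcal{R}\ge 0$, and the sectional curvature of an orthonormal plane equals $\lVert[X,Y]\rVert^{2}$, which already yields the lower bound $0$. Since $\beta^{*}\beta$ and $\beta\beta^{*}$ share the same nonzero spectrum, it then suffices to study the single operator $\beta\beta^{*}$ acting on the finite-dimensional space $\mathfrak{h}$.

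The heart of the argument is the identity $\beta\beta^{*}=\tfrac{1}{2}(\mathrm{Id}_{\mathfrak{h}}-C_{\mathfrak{h}})$, where $C_{\mathfrak{h}}=-\sum_{\alpha}(\operatorname{ad}_{f_{\alpha}})^{2}|_{\mathfrak{h}}$ is the Casimir operator of the $\mathfrak{h}$-representation $\mathfrak{h}$ computed with the ambient metric $-B_{\mathfrak{g}}$, for a $(-B_{\mathfrak{g}})$-orthonormal basis $\{f_{\alpha}\}$ of $\mathfrak{h}$. To prove it I would identify $\beta^{*}A$ for $A\in\mathfrak{h}$ with the bivector dual to the skew operator $\operatorname{ad}_{A}|_{\mathfrak{p}}$, obtain $\beta\beta^{*}A=-\tfrac{1}{2}\sum_{i}(\operatorname{ad}_{e_{i}})^{2}A$ for an orthonormal basis $\{e_{i}\}$ of $\mathfrak{p}$, and then invoke that the Casimir of $\mathfrak{g}$ on its adjoint representation is the identity (equivalently, that $M$ is Einstein with constant $\tfrac{1}{2}$). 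Splitting this Casimir into its $\mathfrak{p}$- and $\mathfrak{h}$-contributions gives the formula. Since each $\operatorname{ad}_{f_{\alpha}}|_{\mathfrak{h}}$ is skew, $C_{\mathfrak{h}}=\sum_{\alpha}(\operatorname{ad}_{f_{\alpha}})^{*}\operatorname{ad}_{f_{\alpha}}\ge 0$, so $0\le\beta\beta^{*}\le\tfrac{1}{2}\mathrm{Id}$ and every eigenvalue of $\mathcal{R}$ lies in $[0,\tfrac{1}{2}]$; for a reducible space the remaining flat or mixed directions only add further zeros, so the bound is completely general.

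Next I would read off the eigenvalues from the structure of $\mathfrak{h}$. Decompose $\mathfrak{h}=\mathfrak{z}\oplus\mathfrak{h}_{1}\oplus\dots\oplus\mathfrak{h}_{k}$ into center and simple ideals. The adjoint action makes each summand an isotypic block on which $C_{\mathfrak{h}}$ is a scalar: it is $0$ on $\mathfrak{z}$, and on $\mathfrak{h}_{i}$ it equals $1/c_{i}$, where $c_{i}>0$ is defined by $B_{\mathfrak{g}}|_{\mathfrak{h}_{i}}=c_{i}\,B_{\mathfrak{h}_{i}}$ (using that a simple algebra has Casimir $1$ on its own adjoint with its Killing form). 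Hence the nonzero eigenvalues of $\mathcal{R}$ are $\tfrac{1}{2}$, occurring exactly when $\mathfrak{z}\ne 0$, i.e.\ in the Hermitian case, together with the numbers $\tfrac{1}{2}(1-1/c_{i})$. Each $c_{i}$ is a positive rational number, since both $B_{\mathfrak{g}}|_{\mathfrak{h}_{i}}$ and $B_{\mathfrak{h}_{i}}$ are rational multiples of the trace form in a defining representation, so $1/c_{i}\in\mathbb{Q}$ and the asserted rationality follows; moreover the number of distinct nonzero eigenvalues is at most the number of distinct $c_{i}$ plus one.

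It remains to cap this count at three and to identify the extremal case, and this is where the classification of irreducible symmetric spaces is unavoidable: this case-check is the main labor of the proof. Going through Cartan's list, the isotropy algebra $\mathfrak{h}$ always has at most two simple ideals and at most a one-dimensional center. The Lie groups and the spaces with simple isotropy give one eigenvalue; the real and quaternionic Grassmannians and the Wolf spaces have two simple ideals but no center, and the non-Grassmannian Hermitian spaces have one simple ideal plus a center, so each yields at most two distinct nonzero eigenvalues. The unique family with two simple ideals \emph{and} a nontrivial center is the complex Grassmannian $\mathrm{SU}(p+q)/\mathrm{S}(\mathrm{U}(p)\times\mathrm{U}(q))$, where $\mathfrak{h}=\mathfrak{su}(p)\oplus\mathfrak{su}(q)\oplus\mathfrak{u}(1)$ and a short computation gives $c_{1}=(p+q)/p$ and $c_{2}=(p+q)/q$; these differ precisely when $p\ne q$, producing the three distinct values $\tfrac{p}{2(p+q)}$, $\tfrac{q}{2(p+q)}$, and $\tfrac{1}{2}$, and collapsing to two when $p=q$. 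This simultaneously bounds the number of nonzero eigenvalues by three and shows that three occur only for the complex Grassmannians, completing the proof.
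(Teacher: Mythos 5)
Your conceptual core is correct and is, modulo conventions, the paper's own argument in different clothing: the operator $C_{\mathfrak h}$ you extract from the Casimir splitting is exactly the paper's operator $P$ defined by $B_{\mathfrak g}(P\cdot,\cdot)|_{\mathfrak h\times\mathfrak h}=B_{\mathfrak h}(\cdot,\cdot)$, and your identity $\beta\beta^{*}=\tfrac12(\mathrm{Id}_{\mathfrak h}-C_{\mathfrak h})$ is Proposition \ref{prop:curv_op}, $\mathfrak R=\tfrac12(I-P)$, transported to $\mathfrak h$ via the shared nonzero spectrum of $\beta^{*}\beta$ and $\beta\beta^{*}$. The paper proves this by a direct computation against projections of bivectors; your derivation by splitting the Casimir of $\mathfrak g$ on its adjoint representation into $\mathfrak h$- and $\mathfrak p$-contributions is a clean alternative route to the same formula. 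The positivity $0\le C_{\mathfrak h}\le \mathrm{Id}$, the scalar value $1/c_i$ on each simple ideal, the eigenvalue $\tfrac12$ on the center, and the rationality of the $c_i$ are all sound and correspond to the paper's two propositions. (Watch your normalizations: with the paper's conventions $g(\mathfrak R(X\wedge Y),Z\wedge W)=2R(X,Y,Z,W)$ and $|X\wedge Y|^{2}=2$, one gets $\mathfrak R=2\beta^{*}\beta$; your factors are internally consistent and yield the same eigenvalues, but as stated they do not match the paper's conventions.)

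The genuine gap is in the classification step, which you yourself flag as the main labor. The claim that the isotropy algebra of an irreducible symmetric space ``always has at most two simple ideals and at most a one-dimensional center'' is false, because $\mathfrak{so}(4)\cong\mathfrak{su}(2)\oplus\mathfrak{su}(2)$ splits: the real Grassmannians $\mathrm{SO}(n+4)/(\mathrm{SO}(4)\times\mathrm{SO}(n))$ have three simple ideals, and $\mathrm{SO}(8)/(\mathrm{SO}(4)\times\mathrm{SO}(4))$ has four---the paper points this last example out explicitly, noting it nonetheless has a single nonzero eigenvalue. Likewise the even quadric $\mathrm{SO}(6)/(\mathrm{SO}(2)\times\mathrm{SO}(4))$ has a one-dimensional center \emph{and} two simple ideals, so your claimed uniqueness of the complex Grassmannian presentation does not hold at the level of ideal counts. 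Your bound of three distinct eigenvalues survives only because the split $\mathfrak{su}(2)$ factors of an $\mathfrak{so}(4)$ isotropy block inside $\mathfrak{so}(p+q)$ are conjugate by an element of $\mathrm O(4)\subset\mathrm O(p+q)$ normalizing $\mathfrak h$, hence carry the same constant $c_i$---and this coincidence genuinely requires proof, since it is not automatic: in $\mathrm G_{2}/\mathrm{SO}(4)$ the two $\mathfrak{su}(2)$ ideals have \emph{different} constants, giving the distinct eigenvalues $\tfrac14$ and $\tfrac{5}{12}$. So the final clause of the theorem still needs either the conjugacy argument just sketched or the paper's explicit Killing-form computations and tables; as written, your enumeration of cases is incorrect even though the theorem's conclusion is true.
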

The first statement has a conceptual proof that also leads to a general principle for calculating the eigenvalues (see Subsection \ref{subsec:Methods-for-Calculating}).   The last observation follows from the tabulations of eigenvalues in section \ref{sec:The-Tables}. At the end of Subsection \ref{subsec:Methods-for-Calculating}, we compute the curvature operator for Jensen's second homogeneous Einstein metric on $\mathrm S^3 \times \mathrm S^3$, and in particular show that the maximum eigenvalue is bounded by the Einstein constant, hence this property can hold on non-symmetric spaces. We are not aware of any non-symmetric \emph{K\"ahler}--Einstein metric satisfying this bound.

When the maximum eigenvalue equals the Einstein constant, the space is forced to be Hermitian.
\begin{maintheorem}
\label{thm:Hermitian}The following are equivalent for a compact irreducible symmetric space $M=\mathrm{G}/\mathrm{H}$:
\begin{enumerate}
    \item The maximum eigenvalue for the curvature operator is $\frac12$,
    \item $\mathfrak{h}$ has a center of dimension $1$,
    \item $M$  is a Hermitian symmetric space,
    \item $b_2=1$.

\end{enumerate}
The eigenvector for $\frac12$  corresponds precisely to the K\"{a}hler form.
\end{maintheorem}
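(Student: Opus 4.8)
The strategy is to establish a cycle of implications $(3)\Rightarrow(2)\Rightarrow(1)\Rightarrow(3)$ together with the equivalence $(2)\Leftrightarrow(4)$, and then to identify the eigenvector. Let me think about each piece.

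For the curvature operator statements, I should recall that on a symmetric space $M=G/H$ the curvature operator $\mathcal R$ acts on $\Lambda^2\mathfrak m \cong \mathfrak m\wedge\mathfrak m$ (where $\mathfrak g = \mathfrak h\oplus\mathfrak m$ is the Cartan decomposition), and the key identity is that $R(X,Y)Z = -[[X,Y],Z]$ for $X,Y,Z\in\mathfrak m$. This means the curvature operator is essentially built from the bracket $\mathfrak m\wedge\mathfrak m \to \mathfrak h$, $X\wedge Y\mapsto [X,Y]$, composed with its adjoint. Concretely, with the normalization $-B_{\mathfrak g}$ giving Einstein constant $\frac12$, the curvature operator should factor as (a constant times) $\beta^*\beta$ where $\beta\colon \Lambda^2\mathfrak m\to\mathfrak h$ is the bracket map.

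The implications $(3)\Rightarrow(4)$ and $(4)\Rightarrow(2)$ are the "soft" ones. If $M$ is Hermitian symmetric, the Kähler form gives a nonzero class in $H^2$, and irreducibility forces $b_2=1$; this is classical. For $(4)\Rightarrow(2)$ I would use that $H^2(M)\cong (\Lambda^2\mathfrak m)^H$ and relate harmonic $2$-forms to the center of $\mathfrak h$ via the isotropy representation—a dimension-$1$ center produces exactly a one-dimensional space of invariants. The implication $(2)\Rightarrow(3)$ is the classical characterization of Hermitian symmetric spaces: a one-dimensional center $\mathfrak z(\mathfrak h)=\mathbb R\,\xi$ gives, via $\mathrm{ad}_\xi|_{\mathfrak m}$, a complex structure $J$ on $\mathfrak m$ that is $H$-invariant and parallel, making $M$ Kähler.

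The analytic heart is $(2)\Rightarrow(1)$ and $(1)\Rightarrow(2)$, connecting the center of $\mathfrak h$ to the top eigenvalue $\tfrac12$. For $(2)\Rightarrow(1)$: given a central element $\xi\in\mathfrak z(\mathfrak h)$ normalized appropriately, the induced complex structure $J\in\mathfrak{so}(\mathfrak m)$ corresponds under $\mathfrak{so}(\mathfrak m)\cong\Lambda^2\mathfrak m$ to the Kähler form $\omega$, and I expect a direct computation to show $\mathcal R\,\omega = \tfrac12\,\omega$, exploiting that $J=\mathrm{ad}_\xi|_{\mathfrak m}$ intertwines with the bracket. This simultaneously proves the final sentence of the theorem. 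For the converse $(1)\Rightarrow(2)$—an eigenvector $\eta$ with $\mathcal R\,\eta=\tfrac12\,\eta$ must come from the center—I would use the general bound from Theorem A (eigenvalues lie in $[0,\tfrac12]$) and analyze the equality case: writing $\eta\in\Lambda^2\mathfrak m$ as a skew endomorphism $A$ of $\mathfrak m$, the eigenvalue equation combined with the Einstein condition should force $A$ to commute with the full isotropy action and to satisfy $A=\mathrm{ad}_\xi$ for some $\xi$ centralizing $\mathfrak h$, hence $\xi\in\mathfrak z(\mathfrak h)$.

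The main obstacle I anticipate is the sharp equality analysis in $(1)\Rightarrow(2)$: establishing the $[0,\tfrac12]$ bound (Theorem A) is one thing, but pinning down exactly when $\tfrac12$ is attained requires understanding the kernel/cokernel structure of the bracket map $\beta$ and showing that the extremal eigenvector is forced to be $H$-invariant and of the special form $\mathrm{ad}_\xi$. I would try to make this work uniformly—via the representation-theoretic structure of $\mathcal R=c\,\beta^*\beta$ and Schur's lemma on the isotropy representation—rather than case-by-case, since the irreducibility of the isotropy action is what ultimately ties the equality case to a one-dimensional center.
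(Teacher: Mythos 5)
Your overall architecture is the same as the paper's: the equivalences (2)--(4) are quoted from Cartan's classification there too, and the genuine content is (1)$\Leftrightarrow$(2), which you correctly locate in the fact that the curvature operator factors through the bracket map $\beta\colon\bigwedge^{2}\mathfrak p\to\mathfrak h$, $X\wedge Y\mapsto[X,Y]$ (indeed $\mathfrak R=2\beta^{\dagger}\beta$ with the paper's conventions, which is equivalent to its setup). The problem is that the decisive step---the sharp equality analysis at the eigenvalue $\frac12$---is exactly what you defer (``I expect a direct computation,'' ``the main obstacle I anticipate''), and the tools you name cannot close it. Schur's lemma only says that $\mathfrak R$ is a scalar on each irreducible summand of the holonomy algebra; it computes none of these scalars, so it cannot decide which summand, if any, attains $\frac12$. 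The Einstein condition yields only the trace identity $2\operatorname{tr}\mathfrak R=\operatorname{scal}$, which constrains a weighted average of the eigenvalues, not the maximum. Finally, your assertion that the eigenvalue equation ``forces $A$ to commute with the full isotropy action'' is essentially the conclusion you want (that the $\frac12$-eigenspace is the trivial summand $\mathfrak h_{0}$), not an available step; as stated this is circular.

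The single missing idea is a comparison of Killing forms. For $Z,W\in\mathfrak h$ one has
\[
B_{\mathfrak g}(Z,W)=B_{\mathfrak h}(Z,W)+\operatorname{tr}\bigl(\operatorname{ad}_{Z}|_{\mathfrak p}\operatorname{ad}_{W}|_{\mathfrak p}\bigr),
\]
since $\operatorname{ad}_{Z}$ preserves the splitting $\mathfrak g=\mathfrak h\oplus\mathfrak p$. Defining $P\geq0$ on $\mathfrak h$ by $B_{\mathfrak g}(P\cdot,\cdot)|_{\mathfrak h\times\mathfrak h}=B_{\mathfrak h}$, the paper's Proposition~\ref{prop:curv_op} turns this into the exact formula $\mathfrak R=\frac12(I-P)$ on the holonomy algebra $\mathfrak h\simeq\operatorname{ad}_{\mathfrak h}|_{\mathfrak p}\subset\bigwedge^{2}T_{p}M$, with $\mathfrak R=0$ on its orthogonal complement; in your language, $\beta^{\dagger}W=\frac12\operatorname{ad}_{W}|_{\mathfrak p}$ and $\beta\beta^{\dagger}=\frac14(I-P)$, so the nonzero spectrum of $\mathfrak R=2\beta^{\dagger}\beta$ is that of $\frac12(I-P)$. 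With this in hand, both directions of (1)$\Leftrightarrow$(2) are immediate and uniform, with no case analysis: the $\frac12$-eigenspace is $\ker P$, which is the radical of $B_{\mathfrak h}$, i.e.\ the center $\mathfrak h_{0}$ of the compact algebra $\mathfrak h$ (and the embedding $\mathfrak h\hookrightarrow\bigwedge^{2}\mathfrak p$ is injective, as the paper checks using $[\mathfrak p,\mathfrak p]=\mathfrak h$ and semisimplicity, so a nonzero center really produces eigenvectors). Irreducibility then caps $\dim\mathfrak h_{0}$ at $1$ (classical), and $\mathfrak h_{0}=\mathbb R\,\xi$ with $\operatorname{ad}_{\xi}|_{\mathfrak p}$ the complex structure gives the final sentence about the K\"ahler form, exactly as you intended in your (2)$\Rightarrow$(1) sketch. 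So your plan points in the right direction but has a genuine gap at its core; the Killing-form identity above is the one computation that closes it.
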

Here the equivalence of the last three statements is part of Cartan's classification of symmetric spaces. The proof uses a general formula for the curvature operator (see Proposition \ref{prop:curv_op}) to prove that (1) and (2) are equivalent. The fact that (1) implies (4) also follows from Theorem \ref{thm:Bochner}, which holds for Einstein manifolds without any symmetry assumptions.

By going through the table for compact irreducible symmetric spaces one can see that those with vanishing Euler characteristic all have simple isotropy group (see \ref{subsec:Simple-isotropy}). For spaces with simple isotropy we make the following observations.

\begin{maintheorem}
\label{thm:simpleisotropy}
If $M=\mathrm{G}/\mathrm{H}$ where $\mathrm{H}$  is simple, then there is only one nonzero eigenvalue 
\[\lambda =\frac{1}{4}\frac{\dim M}{\dim \mathrm{H} }.\]
Moreover, either
\begin{enumerate}
    \item $\dim M=\dim \mathrm{H}$ in which case $\mathrm{G}=\mathrm{H}\times \mathrm{H}$, i.e., $M$  is of type II and $b_3=1$, or
    \item  $\dim M \neq \dim \mathrm{H}$, in which case $\mathrm{G}$ is simple and $b_2 = b_3 =b_4=0$.
\end{enumerate}
\end{maintheorem}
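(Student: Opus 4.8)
The plan is to realize the curvature operator through the bracket map and then exploit that a simple isotropy algebra is an \emph{irreducible} adjoint module. Writing $\mathfrak{g}=\mathfrak{h}\oplus\mathfrak{m}$ for the Cartan decomposition, the general formula of Proposition \ref{prop:curv_op} identifies the curvature operator with $\mathcal{R}=\beta^{*}\beta$, where $\beta\colon\Lambda^{2}\mathfrak{m}\to\mathfrak{h}$, $\beta(X\wedge Y)=[X,Y]$, is the bracket map (surjective by almost-effectiveness) and the adjoint is taken with respect to $-B_{\mathfrak{g}}$. Hence the nonzero spectrum of $\mathcal{R}$ coincides with that of the $\mathrm{Ad}_{\mathrm{H}}$-equivariant, positive operator $\beta\beta^{*}$ on $\mathfrak{h}$. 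Since $\mathrm{H}$ is simple, $\mathfrak{h}$ is the irreducible adjoint module, so Schur's lemma forces $\beta\beta^{*}=\mu\,\mathrm{Id}_{\mathfrak{h}}$, and surjectivity of $\beta$ gives $\mu>0$; this already yields a \emph{single} nonzero eigenvalue $\mu$. To pin down its value I would take traces: $\mathrm{tr}\,\mathcal{R}=\mathrm{tr}\,\beta\beta^{*}=\mu\dim\mathrm{H}$, while $\mathrm{tr}\,\mathcal{R}=\tfrac12\,\mathrm{scal}$. As $M$ is Einstein with constant $\tfrac12$ we have $\mathrm{scal}=\tfrac12\dim M$, whence $\mu=\tfrac14\,\dim M/\dim\mathrm{H}$, as claimed. (Equivalently, evaluating $\beta\beta^{*}$ via the fact that the Casimir of $\mathfrak{g}$ acts as the identity on $\mathfrak{g}$ gives $\mu=\tfrac12(1-1/c)$ with $c=B_{\mathfrak{g}}|_{\mathfrak{h}}/B_{\mathfrak{h}}$.)

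For the dichotomy I would invoke Cartan's classification: an irreducible compact symmetric space is either of type I, with $\mathrm{G}$ simple, or of type II, where $\mathfrak{g}=\mathfrak{k}\oplus\mathfrak{k}$ and $\mathrm{H}$ is the diagonal $\cong\mathrm{K}$. In the type II case $\mathfrak{m}\cong\mathfrak{k}\cong\mathfrak{h}$ as $\mathrm{Ad}_{\mathrm{H}}$-modules, so $\dim M=\dim\mathrm{H}$, $\mathrm{G}=\mathrm{H}\times\mathrm{H}$, and $M$ is a compact simple Lie group; its bi-invariant forms give $b_{1}=b_{2}=0$ and $b_{3}=1$ (the Cartan $3$-form $B([\cdot,\cdot],\cdot)$). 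The real content of case (1) is therefore the converse, that $\dim M=\dim\mathrm{H}$ forces type II. Through the trace formula, $\dim M=\dim\mathrm{H}$ is equivalent to $\mu=\tfrac14$, hence to $c=2$, i.e. to $B_{\mathfrak{g}}|_{\mathfrak{h}}=2B_{\mathfrak{h}}$; I would then argue that no type I pair with simple $\mathrm{H}$ satisfies this (equivalently $\mathfrak{m}\not\cong\mathfrak{h}$ for type I), which in the worst case is read off the classification list.

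It remains to establish $b_{2}=b_{3}=b_{4}=0$ in case (2). Here $b_{2}=0$ is immediate from Theorem \ref{thm:Hermitian}: a simple $\mathfrak{h}$ has trivial center, so $M$ is not Hermitian and $b_{2}\neq1$, while $b_{2}\le1$ for irreducible $M$. For the remaining degrees I would use that on a symmetric space the invariant forms are exactly the harmonic ones, so $b_{k}=\dim(\Lambda^{k}\mathfrak{m}^{*})^{\mathrm{H}}$. Dualizing via the metric, $b_{3}$ is bounded by the multiplicity of $\mathfrak{m}$ in $\Lambda^{2}\mathfrak{m}$, which equals $1$ exactly for the adjoint module (type II) and vanishes for the type I isotropy representations; a parallel count of trivial summands controls $b_{4}$. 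I expect this last step---the vanishing of the low-degree invariants in $\Lambda^{3}\mathfrak{m}$ and $\Lambda^{4}\mathfrak{m}$ for the type I simple-isotropy representations, together with the converse claim in case (1)---to be the main obstacle, since unlike the eigenvalue computation it does not reduce to a single equivariance argument and is ultimately verified from the explicit isotropy-representation decompositions recorded in the tables of Section \ref{sec:The-Tables}.
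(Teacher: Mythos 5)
Your proposal is correct and follows essentially the same route as the paper: a Schur's-lemma argument (simple $\mathfrak h$ forces a single nonzero eigenvalue) combined with the trace identity $2\operatorname{tr}\mathfrak{R}=\operatorname{scal}=\frac{1}{2}\dim M$, with the dichotomy, the type II statement $b_3=1$, and the vanishing $b_2=b_3=b_4=0$ obtained, exactly as in the paper, from the classification and the tabulated Poincar\'e polynomials of Subsections \ref{subsec:The-Groups} and \ref{subsec:Simple-isotropy}. One harmless normalization quibble: with the paper's conventions ($|X\wedge Y|^2=2$ and the adjoint of the bracket map $\beta$ taken with respect to $-B_{\mathfrak g}$) one finds $\mathfrak{R}=2\beta^{*}\beta$ rather than $\beta^{*}\beta$, but since you extract the eigenvalue as $\operatorname{tr}\mathfrak{R}/\dim\mathrm{H}$ the scaling cancels and your final formula is unaffected.
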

The proof follows from the calculations in Subsections \ref{subsec:The-Groups} and \ref{subsec:Simple-isotropy}.

As mentioned, the Poincar\'{e} polynomials are well-known and were first
compiled by Takeuchi \cite{Takeuchi1962}, who discovered a general formula for symmetric
spaces (see also \cite{Iwamoto1949}). That said, the Poincar\'{e} polynomial for the real Grassmannian
$\mathrm{D}_{k+l}\mathrm{I}_{2k}$ (see Section \ref{sec:The-Tables}
for notation) seems to be missing a factor in our scanned copy of the paper.
All of the polynomials have been recalculated here using quantum numbers
as Takeuchi's formula becomes particularly elegant and easy to manipulate
with their use (see Subsections \ref{subsec:Quantum-Numbers} and \ref{subsec:Formulas-for-Poincare}). This also leads to a simple
formula for the Euler characteristics. There is an older formula due to Wang  (see \cite{Wang1949})  where the Euler characteristic is given as the index of the Weyl groups $W(\mathrm{H})\subset W(\mathrm{G})$. This is the formula used by Takeuchi. It
should be mentioned that there are unfortunately differences in both
Euler characteristics and Poincar\'{e} polynomials scattered throughout
the literature. The de Rham cohomologies with $\mathbb C$-coefficients for many symmetric spaces were computed by Leung in \cite{Leung2016}. For generalizations of Takeuchi's work to a larger class of homogeneous spaces see \cite{Kuin1965}, \cite{Terzic2001FPM}, and \cite{Terzic2001}. For a list of references as well as overview of the cohomology rings of exceptional symmetric spaces see \cite{Piccinni2017}. Lie groups and the classical symmetric spaces are covered in the comprehensive guide \cite{MinuraToda1991} (see also \cite{Ramanujam1969} for an elegant derivation using Morse theory). For a treatment of the cohomology of manifolds and homogeneous spaces see also \cite{GHV}.

The precise subgroups $\mathrm{H}\subset \mathrm{G}$ have also been selected from Takeuchi
with the exception of $\mathrm{E}_{8}\mathrm{VIII}$. The literature
has discrepancies in what these subgroups are. The best guide is \cite{Yokota2025}, where all the relevant subgroups of the exceptional groups are explained in detail. 

\subsection*{Acknowledgments}
The authors would like to thank Yuri Nikolaevski, Jan Nienhaus, Matthias Wink, Ramiro Lafuente, and Tom\'as Otero for helpful discussions as well as MATRIX for facilitating this work.

\section{Notation and Methods}\label{sec:Notation-and-Methods}

\subsection{Quantum Numbers}\label{subsec:Quantum-Numbers}

For positive integers $n$ the quantum numbers are the partial sums
of a geometric progression: 
\[
\left[n\right]_{q}=1+q+\cdots+q^{n-1}=\frac{1-q^{n}}{1-q}.
\]
If $m\mid n$, then 
\[
\frac{\left[n\right]_{q}}{\left[m\right]_{q}}=\left[\frac{n}{m}\right]_{q^{m}}.
\]
The quantum binomial also known as the Gaussian binomial is given
by 
\[
\binom{k+l}{k}_{q}=\frac{\left[k+l\right]_{q}\cdots\left[1\right]_{q}}{\left[k\right]_{q}\cdots\left[1\right]_{q}\left[l\right]_{q}\cdots\left[1\right]_{q}}=\frac{\left[k+l\right]_{q}\cdots\left[l+1\right]_{q}}{\left[k\right]_{q}\cdots\left[1\right]_{q}}.
\]
This is indeed a polynomial with nonnegative integer coefficients
as it satisfies the recurrence relations 
\[
\binom{k+l}{k}_{q}=q^{k}\binom{k+l-1}{k}_{q}+\binom{k+l-1}{k-1}_{q}
\]
and
\[
\binom{k+l}{k}_{q}=\binom{k+l-1}{k}_{q}+q^{l}\binom{k+l-1}{k-1}_{q}.
\]

This is not always completely obvious even in cases that are easy
to calculate. Consider, for example,
\[
\binom{n}{3}_{q}=\frac{\left[n\right]_{q}\left[n-1\right]_{q}\left[n-2\right]_{q}}{\left[3\right]_{q}\left[2\right]_{q}}
\]
where $n-2$ and $n$ are not divisible by either 2 or 3. For this
to be a polynomial, the expression 
\[
\frac{\left[n-1\right]_{q}}{\left[3\right]_{q}\left[2\right]_{q}}
\]
must also be a polynomial. This in turn requires that $n-1$ be divisible
by 6. This is clearly the case as it is even and one of the numbers
$n-2$, $n-1$, $n$ must be divisible by 3. However, some of the coefficients
will be negative as: 
\[
\frac{\left[6\right]_{q}}{\left[3\right]_{q}\left[2\right]_{q}}=\frac{\left[2\right]_{q^{3}}}{\left[2\right]_{q}}=\frac{1+q^{3}}{1+q}=\frac{1+q^{3}}{1+q}=1-q+q^{2}.
\]

Other quirky rules also come into play, e.g., when $m|n$, then 
\[
\frac{\left[n\right]_{t^{m}}}{\left[m\right]_{t^{n}}}=\frac{\frac{1-t^{mn}}{1-t^{m}}}{\frac{1-t^{nm}}{1-t^{n}}}=\frac{1-t^{n}}{1-t^{m}}=\left[\frac{n}{m}\right]_{t^{m}}.
\]

\subsection{Formulas for Poincar\'{e} Polynomials and Euler Characteristics}\label{subsec:Formulas-for-Poincare}

A Lie group $\mathrm{G}$ has Poincar\'{e} polynomial 
\[
\chi_{G}\left(t\right)=\underset{k\in D_{G}}{\prod}\left(1-t^{2k-1}\right)
\]
where $D_{G}$ consists of the degrees of the generators for the $\mathrm{G}$-invariant
polynomials (see \cite{BorelChevalley1955} and \cite{Coleman1958}). The invariant
polynomials for a group of rank $r$ can be represented as the symmetric
homogeneous polynomials on $\mathbb{R}^{r}$ that are invariant under
the Weyl group. $D$ has $r$ elements. For Lie groups these degrees
are given by:

\medskip{}

\noindent\begin{minipage}[t]{1\columnwidth}%
\centering

\begin{tabular}{|l|c|}
\hline 
Type/Space  & $D_{G}$\tabularnewline
\hline 
$\mathrm{A}_{n}$: $\mathrm{SU}\left(n+1\right)$  & $2,3,...,n+1$\tabularnewline
\hline 
$\mathrm{U}\left(n\right)$  & $1,2,...,n$\tabularnewline
\hline 
$\mathrm{B}_{n}$: $\mathrm{SO}\left(2n+1\right)$  & $2,4,...,2n$\tabularnewline
\hline 
$\mathrm{C}_{n}$: $\mathrm{Sp}\left(n\right)$  & $2,4,...,2n$\tabularnewline
\hline 
$\mathrm{D}_{n}$: $\mathrm{SO}\left(2n\right)$  & $n,2,4,...,2\left(n-1\right)$\tabularnewline
\hline 
$\mathrm{D}_{1}$: $\mathrm{SO}\left(2\right)$  & 1\tabularnewline
\hline 
$\mathrm{E}_{6}$  & $2,5,6,8,9,12$\tabularnewline
\hline 
$\mathrm{E}_{7}$  & $2,6,8,10,12,14,18$\tabularnewline
\hline 
$\mathrm{E}_{8}$  & $2,8,12,14,18,20,24,30$\tabularnewline
\hline 
$\mathrm{F}_{4}$  & $2,6,8,12$\tabularnewline
\hline 
$\mathrm{G}_{2}$  & $2,6$\tabularnewline
\hline 
\end{tabular}%
\end{minipage}

Borel's formula (\cite{Borel1953}) that uses Hirsch's method for calculating the cohomology
of a homogenous space $\mathrm{G}/\mathrm{H}$, where $\mathrm{G}$ and $\mathrm{H}$ have the same rank
is 
\[
\chi_{G/H}\left(t\right)=\frac{\underset{k\in D_{G}}{\prod}\left(1-t^{2k}\right)}{\underset{l\in D_{H}}{\prod}\left(1-t^{2l}\right)}=\frac{\underset{k\in D_{G}}{\prod}\left[k\right]_{t^{2}}}{\underset{l\in D_{H}}{\prod}\left[l\right]_{t^{2}}}
\]
and 
\[
\chi_{G/H}=\chi_{G/H}\left(-1\right)=\frac{\underset{k\in D_{G}}{\prod}k}{\underset{l\in D_{H}}{\prod}l}.
\]
This shows that such homogeneous spaces only have Betti numbers in
even degrees. The majority of symmetric spaces are of this type. All
other homogeneous spaces have zero Euler characteristic (see \cite{Wang1949}).
For the remaining spaces Takeuchi developed a more general formula \cite{Takeuchi1962}. Assume that the rank of $\mathrm{G}$ is $R$ and the rank of $\mathrm{H}$ is
$r<R$. $D_{G}$ can be divided into two sets $D_{G}^{1}$ and $D_{G}^{0}$,
where $D_{G}^{0}$ corresponds to the degrees of polynomials that
vanish when restricted to $\mathrm{H}$. For any homogeneous space where $D_{G}^{1}$
and $D_{H}$ have the same number of elements, $r$, the formula for
the Poincar\'{e} polynomial becomes: 
\[
\chi_{G/H}\left(t\right)=\frac{\underset{k\in D_{G}^{1}}{\prod}\left(1-t^{2k}\right)}{\underset{l\in D_{H}}{\prod}\left(1-t^{2l}\right)}\underset{k\in D_{G}^{0}}{\prod}\left(1+t^{2k-1}\right)=\frac{\underset{k\in D_{G}^{1}}{\prod}\left[k\right]_{t^{2}}}{\underset{l\in D_{H}}{\prod}\left[l\right]_{t^{2}}}\underset{k\in D_{G}^{0}}{\prod}\left(1+t^{2k-1}\right).
\]
Takeuchi shows that this formula is valid for all symmetric spaces
and he calculates in the process $D_{G}^{0}$. It is often not difficult
to guess what $D_{G}^{0}$ should be based on the knowledge of $D_{H}$.
Another useful constraint is that the rational expression:
\[
\frac{\underset{k\in D_{G}^{1}}{\prod}\left(1-t^{2k}\right)}{\underset{l\in D_{H}}{\prod}\left(1-t^{2l}\right)}
\]
should be a polynomial.

\subsection{The Curvature Operator}
On a Riemannian manifold $(M,g)$, we denote by $R$ the Riemann curvature tensor with the convention that $\operatorname{sec}(X,Y) = R(X,Y,X,Y)$ for any orthonormal pair $X,Y$. The Riemannian metric $g$ induces the usual tensor metric on all associated tensor bundles. In particular, for $X,Y$ orthonormal, we have $|X\wedge Y|^2 = 2$. We will also identify $\bigwedge^2 TM$ with $\mathfrak{so}(TM) = \mathfrak{so}(TM,g) \subset \operatorname{End}(M)$ via the map
\[
\sideset{}{^{2}}\bigwedge TM \to \mathfrak{so}(TM); \qquad X\wedge Y \mapsto (Z \mapsto g(X,Z)Y - g(Y,Z)X).
\]
Under this identification the metric on $\bigwedge^2TM$ is identified with the metric $-\operatorname{tr}(\cdot)(\cdot)$ on $\mathfrak{so}(TM,g)$. The curvature operator is the map $\mathfrak{R} \colon \bigwedge^2 TM \to \bigwedge^2 TM$ defined by
\[
g( \mathfrak{R}(X\wedge Y),Z\wedge W) :=2R(X,Y,Z,W); \qquad X,Y,Z,W \in TM.
\]

\subsection{Methods for Calculating Eigenvalues}\label{subsec:Methods-for-Calculating}

We will be considering a compact symmetric space of the
form $M = \mathrm{G}/\mathrm{H}$, where $\mathrm{G}$ is a compact semisimple Lie group and $\mathrm{H}$ is the identity component of the fixed point subgroup of an involutive automorphism of $\mathrm{G}$. We endow the compact Lie algebra $\mathfrak g$ with the inner product $-B_{\mathfrak g}$, where $B_{\mathfrak{g}}$ denotes \emph{Killing form} of $\mathfrak g$. Using this, we obtain an orthogonal decomposition $\mathfrak g = \mathfrak h \oplus \mathfrak p$, where $\mathfrak h$ is the Lie algebra of $\mathrm H$. It is well-known that $[\mathfrak h, \mathfrak h] \subset \mathfrak h$, $[\mathfrak h , \mathfrak p] \subset \mathfrak p$, and $[\mathfrak p, \mathfrak p] \subset \mathfrak h$. Via action fields, the tangent space $T_{\mathrm{H}}M$ is naturally identified with $\mathfrak p$. We endow the space $\mathrm{G}/\mathrm{H}$ with the symmetric Riemannian metric $g$ induced by the $\operatorname{Ad}(\mathrm{H})$-invariant restriction of $-B_{\mathfrak g}$ to $\mathfrak p \times \mathfrak p$.

The Lie algebra $\mathfrak h$ is identified with the \emph{holonomy algebra} in $\mathfrak{so}(\mathfrak p, -B_{\mathfrak{g}})$ via the map
\[
\mathfrak{h} \to \mathfrak{so}(TM) =\sideset{}{^{2}}\bigwedge TM; \qquad Z \mapsto \operatorname{ad}_Z|_{\mathfrak p},
\]
which is a well-defined Lie algebra morphism since the adjoint maps $\operatorname{ad}_Z$ preserve $\mathfrak p$ for all $Z \in \mathfrak h$.  It is not hard to check that this map is injective. Indeed, if $\operatorname{ad}_v|_{\mathfrak{p}} = 0$, then since $\operatorname{ad}_v = \operatorname{ad}_v|_{\mathfrak h}$ is a derivation, and $[\mathfrak p,\mathfrak p] = \mathfrak h$, we find that
\[
\operatorname{ad}_v(\mathfrak h) = \operatorname{ad}_v([\mathfrak p , \mathfrak p]) = 0,
\]
which contradicts semisimplicity of $\mathfrak g$.

Our choice of metric gives the following standard formulas for the
Ricci curvature and curvature tensor: 
\begin{eqnarray*}
\mathrm{Ric} & = & \frac{1}{2}g    =   -\frac{1}{2}B_{\mathfrak{g}}|_{\mathfrak{p}\times\mathfrak{p}},\\
R\left(X_1,Y_1,X_2,Y_2\right) & = & -B_{\mathfrak g}\left(\left[X_1,Y_1\right],\left[X_2,Y_2\right]\right).
\end{eqnarray*}
We define an auxiliary symmetric nonnegative operator $P:\mathfrak{h}\rightarrow\mathfrak{h}$ by 
\[
B_{\mathfrak{g}}\left(P\cdot,\cdot\right)|_{\mathfrak{h}\times\mathfrak{h}}=B_{\mathfrak{h}}\left(\cdot,\cdot\right).
\]

This can be used to calculate the curvature operator explicitly from the Killing forms.
\begin{prop}
\label{prop:curv_op}
Under the identification $\mathfrak h \simeq \operatorname{ad}_{\mathfrak h}|_{\mathfrak p} \subset \sideset{}{^{2}}\bigwedge TM$, the curvature operator of $g$ is given by
$\mathfrak{R}=\frac{1}{2}\left(I-P\right)$.

\end{prop}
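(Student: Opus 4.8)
The plan is to factor the curvature operator through the bracket. Identify $T_{\mathrm H}M\simeq\mathfrak p$, write $\langle\cdot,\cdot\rangle=-B_{\mathfrak g}$ for the inner product (positive definite on $\mathfrak g$, hence on $\mathfrak h$ and on $\sideset{}{^{2}}\bigwedge\mathfrak p$ with its induced metric), and introduce the \emph{bracket map} $\beta\colon\sideset{}{^{2}}\bigwedge\mathfrak p\to\mathfrak h$, $\beta(X\wedge Y)=[X,Y]$, which is well-defined since the bracket is antisymmetric and $[\mathfrak p,\mathfrak p]\subset\mathfrak h$. Feeding the curvature formula $R(X_1,Y_1,X_2,Y_2)=-B_{\mathfrak g}([X_1,Y_1],[X_2,Y_2])$ into the defining identity $g(\mathfrak R(X\wedge Y),Z\wedge W)=2R(X,Y,Z,W)$ turns it into $\langle\mathfrak R(X\wedge Y),Z\wedge W\rangle=2\langle\beta(X\wedge Y),\beta(Z\wedge W)\rangle$, which is exactly the operator identity $\mathfrak R=2\beta^{*}\beta$, where $\beta^{*}$ is the adjoint of $\beta$ for the two inner products above.

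The key step is to recognize that the adjoint $\beta^{*}$ is, up to a factor, the holonomy embedding $\iota\colon\mathfrak h\to\sideset{}{^{2}}\bigwedge\mathfrak p$, $Z\mapsto\operatorname{ad}_Z|_{\mathfrak p}$. I would first compute, for any $A\in\mathfrak{so}(\mathfrak p)$ regarded as an element of $\sideset{}{^{2}}\bigwedge\mathfrak p$, the pairing $\langle X\wedge Y,A\rangle=-\operatorname{tr}((X\wedge Y)A)=2\langle AX,Y\rangle$ directly from the trace description of the metric on $\sideset{}{^{2}}\bigwedge TM$ recalled above. Specializing to $A=\iota(Z)=\operatorname{ad}_Z|_{\mathfrak p}$ and invoking the invariance identity $B_{\mathfrak g}([Z,X],Y)=B_{\mathfrak g}(Z,[X,Y])$ gives $\langle X\wedge Y,\iota(Z)\rangle=2\langle[X,Y],Z\rangle=2\langle\beta(X\wedge Y),Z\rangle$, i.e. $\iota=2\beta^{*}$. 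In particular $\operatorname{im}\mathfrak R=\operatorname{im}\beta^{*}=\iota(\mathfrak h)$ and $\ker\mathfrak R=\ker\beta=\iota(\mathfrak h)^{\perp}$, so $\mathfrak R$ vanishes off the holonomy subspace and it remains only to compute its restriction to $\iota(\mathfrak h)$.

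On $\iota(\mathfrak h)$ I would use the intertwining relation $\mathfrak R\circ\beta^{*}=2\beta^{*}\beta\beta^{*}=\beta^{*}\circ(2\beta\beta^{*})$: since $\beta^{*}=\tfrac12\iota$ is a scalar multiple of the identifying map, this says that under $\mathfrak h\simeq\iota(\mathfrak h)$ the restriction $\mathfrak R|_{\iota(\mathfrak h)}$ corresponds to the operator $2\beta\beta^{*}$ on $\mathfrak h$. To evaluate the latter, note $\langle\beta\beta^{*}Z,Z'\rangle=\tfrac14\langle\iota Z,\iota Z'\rangle=-\tfrac14\operatorname{tr}_{\mathfrak p}(\operatorname{ad}_Z\operatorname{ad}_{Z'})$; decomposing the Killing form of $\mathfrak g$ as $B_{\mathfrak g}(Z,Z')=\operatorname{tr}_{\mathfrak h}(\operatorname{ad}_Z\operatorname{ad}_{Z'})+\operatorname{tr}_{\mathfrak p}(\operatorname{ad}_Z\operatorname{ad}_{Z'})=B_{\mathfrak h}(Z,Z')+\operatorname{tr}_{\mathfrak p}(\operatorname{ad}_Z\operatorname{ad}_{Z'})$ (valid because $\operatorname{ad}_Z$ preserves both $\mathfrak h$ and $\mathfrak p$) together with the defining relation $B_{\mathfrak g}(PZ,Z')=B_{\mathfrak h}(Z,Z')$ of $P$ yields $\langle\beta\beta^{*}Z,Z'\rangle=\tfrac14\langle(I-P)Z,Z'\rangle$. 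Hence $2\beta\beta^{*}=\tfrac12(I-P)$, and therefore $\mathfrak R=\tfrac12(I-P)$ on $\iota(\mathfrak h)$, which is the asserted formula under the stated identification.

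The main obstacle is bookkeeping of the several factors of two built into the conventions---the normalization $g(\mathfrak R(X\wedge Y),Z\wedge W)=2R(X,Y,Z,W)$, the identity $|X\wedge Y|^2=2$, and the fact that the metric on $\sideset{}{^{2}}\bigwedge TM$ is the trace form $-\operatorname{tr}(\cdot)(\cdot)$ itself---which must be tracked so that the two independent places a factor $2$ appears, namely $\mathfrak R=2\beta^{*}\beta$ and $\iota=2\beta^{*}$, combine correctly into the final $\tfrac12$. The only genuinely structural input is that $\operatorname{ad}_Z$ preserves the splitting $\mathfrak g=\mathfrak h\oplus\mathfrak p$ for $Z\in\mathfrak h$, which is what lets the Killing form of $\mathfrak g$ split as a sum of the $\mathfrak h$- and $\mathfrak p$-traces and thereby introduces $P$.
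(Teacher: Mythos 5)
Your argument is correct, and the factor-of-two bookkeeping all checks out: $\operatorname{tr}\bigl((X\wedge Y)A\bigr)=-2g(AX,Y)$ gives $\langle X\wedge Y,\iota Z\rangle=2\langle[X,Y],Z\rangle$, hence $\iota=2\beta^{*}$; and the splitting $B_{\mathfrak g}(Z,Z')=B_{\mathfrak h}(Z,Z')+\operatorname{tr}_{\mathfrak p}(\operatorname{ad}_Z\operatorname{ad}_{Z'})$ for $Z,Z'\in\mathfrak h$ gives $\beta\beta^{*}=\tfrac14(I-P)$, so $\mathfrak R=2\beta^{*}\beta$ corresponds to $2\beta\beta^{*}=\tfrac12(I-P)$ on $\iota(\mathfrak h)$. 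The difference from the paper is one of organization rather than substance: the paper never introduces $\beta$ or adjoints, but instead starts from the operator identity $\mathfrak R(X\wedge Y)=\operatorname{ad}_{[X,Y]}|_{\mathfrak p}$, defines $Z(X,Y)$ by $(X\wedge Y)_{\mathfrak h}=\operatorname{ad}_{Z(X,Y)}|_{\mathfrak p}$, and verifies the single identity $\tfrac12(I-P)Z(X,Y)=[X,Y]$ weakly against test elements $W\in\mathfrak h$ --- a one-pass chain of equalities that uses exactly your two ingredients (the pairing formula and the Killing-form splitting). Your factorization $\mathfrak R=\tfrac12\,\iota\iota^{*}$ with $\iota^{*}\iota=I-P$ buys slightly more: it makes $\operatorname{im}\mathfrak R=\iota(\mathfrak h)$ and $\ker\mathfrak R=\iota(\mathfrak h)^{\perp}$ explicit (in the paper the vanishing of $\mathfrak R$ off the holonomy subspace is left implicit in self-adjointness), and it yields $\mathfrak R\geq 0$ and $P\leq I$ for free, facts the paper re-derives separately via skew-symmetry when proving Theorem A. Conversely, the paper's version is shorter and sidesteps your reliance on the projection onto $\iota(\mathfrak h)$; note that both routes ultimately lean on the injectivity of $\iota$ (equivalently $I-P>0$), which the paper establishes just before the proposition.
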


\begin{proof}
    In our convention, we have $R(X,Y) = \operatorname{ad}_{[X,Y]}|_{\mathfrak p}$ for all $X,Y \in \mathfrak p$. It follows that
\[ \mathfrak R(X\wedge Y) = \operatorname{ad}_{[X,Y]}|_{\mathfrak{p}} \in \mathfrak{so}(\mathfrak p, -B_{\mathfrak g}) = \sideset{}{^{2}}\bigwedge \mathfrak p. \]
Now let $Z \colon \mathfrak p \times \mathfrak p \to \mathfrak h$ be the unique bilinear map defined by 
\[(X\wedge Y)_{\mathfrak h} = \operatorname{ad}_{Z(X,Y)}|_{\mathfrak p},\]
where $(\cdot)_{\mathfrak h}$ denotes orthogonal projection onto $\operatorname{ad}_{\mathfrak h}|_{\mathfrak p} \subset \bigwedge^2 \mathfrak p$. Since any element of the holonomy algebra can be written as a linear combination of these projected bivectors, the result will follow from the claim that
\[
        \frac 12 (I - P)Z(X,Y) = [X,Y],
 \]
for all $X,Y \in \mathfrak p$. Indeed, for any $W \in \mathfrak{h}$, we have by definition of the map $P$  that for $Z = Z(X,Y)$,
    \begin{align*}
        -B_{\mathfrak g}\left(\frac 12(I - P)Z,W\right) &
        = -\frac 12 B_{\mathfrak g}(Z,W) + \frac12 B_{\mathfrak h}(Z,W) \\&
        =  - \frac 12 \operatorname{tr}(\operatorname{ad}_{Z}|_{\mathfrak p}\operatorname{ad}_{W}|_{\mathfrak p})\\
        &
        = \frac 12 g( X\wedge Y , \operatorname{ad}_{W}|_{\mathfrak p}) \\&
        = g(\operatorname{ad}_W X,Y) \\&
        = -B_{\mathfrak g}(W,[X,Y]),
    \end{align*}
    which proves the claim.
\end{proof}

For this method to be useful one needs to know the Killing form explicitly.
This is best achieved by representing $\mathfrak{g}\subset\mathfrak{m}$,
where $\mathfrak{m}$ is a simple matrix algebra of compact type.
In this case $B_{\mathfrak{m}}\left(A,B\right)=c\mathrm{tr}\left(AB\right)$
for a constant $c$ that can be calculated by evaluating both sides
for a specific $H=A=B$. The same can then be repeated with any simple
subalgebra of $\mathfrak{m}$, and consequently, for any subalgebra. The Killing forms for exceptional Lie
groups with suitable representations in matrix algebras can be found
in \cite{Yokota2025}.

A simpler method can often be used to calculate the eigenvalues. Since $\mathfrak h$ is the Lie algebra of a compact Lie group, it decomposes as $\mathfrak{h}=\mathfrak{h}_0\oplus_{i}\mathfrak{h}_{i}$ into simple factors $\mathfrak{h}_i, i\geq1$ and the center $\mathfrak{h}_0$.

\begin{prop}
The decomposition $\mathfrak{h}=\mathfrak{h}_0\oplus_{i}\mathfrak{h}_{i}$ is precisely the decomposition into eigenspaces for $\mathfrak{R}$ and the eigenvalue for $\mathfrak{h}_0$
is the Einstein constant $\frac12$. Consequently, if $\lambda_i$ is the eigenvalue corresponding to $\mathfrak{h}_i$ we obtain:
\[
\frac{1}{2}\dim_{\mathbb{R}}M=\mathrm{scal}=2\mathrm{tr}\mathfrak{R}=\dim \mathfrak{h}_0 +2\sum_{i\geq 1}\lambda_{i}\dim_{\mathbb{R}}\mathfrak{h}_{i}.
\]
\end{prop}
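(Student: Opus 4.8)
The plan is to read everything off the formula $\mathfrak{R}=\tfrac12(I-P)$ from Proposition \ref{prop:curv_op} by analyzing $P$ blockwise with respect to the decomposition $\mathfrak{h}=\mathfrak{h}_0\oplus_i\mathfrak{h}_i$. Since $\mathfrak{R}$ is self-adjoint and its image lies in $\operatorname{ad}_{\mathfrak h}|_{\mathfrak p}\simeq\mathfrak h$, the orthogonal complement of $\mathfrak h$ inside $\bigwedge^2\mathfrak p$ is annihilated by $\mathfrak{R}$, so both the eigenspace statement and the trace reduce to understanding $\tfrac12(I-P)$ on $\mathfrak h$. The whole point is that $P$ is the operator comparing the two $\operatorname{ad}(\mathfrak h)$-invariant symmetric forms $B_{\mathfrak g}|_{\mathfrak h\times\mathfrak h}$ and $B_{\mathfrak h}$, and invariance forces it to respect the canonical decomposition into center and simple ideals.

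First I would show that $\mathfrak{h}=\mathfrak{h}_0\oplus_i\mathfrak{h}_i$ is orthogonal for both Killing forms. For central $v\in\mathfrak h_0$ and $w=[a,b]$ with $a,b$ in a fixed simple factor (using $[\mathfrak h_i,\mathfrak h_i]=\mathfrak h_i$), invariance gives $B(w,v)=-B(b,[a,v])=0$ because $[a,v]=0$; and since distinct simple ideals commute, the identical computation gives $B(\mathfrak h_i,\mathfrak h_j)=0$ for $i\ne j$. This argument is valid verbatim for $B_{\mathfrak g}$ and for $B_{\mathfrak h}$. Because $B_{\mathfrak g}$ is negative definite and $B_{\mathfrak g}(Pv,\cdot)=B_{\mathfrak h}(v,\cdot)$ with $B_{\mathfrak h}$ block-diagonal, $Pv$ is $B_{\mathfrak g}$-orthogonal to every $\mathfrak h_j$ with $j\ne i$ whenever $v\in\mathfrak h_i$, so $P$ preserves each summand.

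Next I would compute $P$ on each block. On the center, $\operatorname{ad}_v|_{\mathfrak h}=0$ forces $B_{\mathfrak h}(v,\cdot)=0$, hence $Pv=0$ and $\mathfrak{R}|_{\mathfrak h_0}=\tfrac12 I$, giving the eigenvalue $\tfrac12$. On a simple factor, both $B_{\mathfrak g}|_{\mathfrak h_i}$ and $B_{\mathfrak h_i}=B_{\mathfrak h}|_{\mathfrak h_i}$ are $\operatorname{ad}(\mathfrak h_i)$-invariant symmetric forms on a simple Lie algebra, hence proportional by uniqueness of invariant forms: $B_{\mathfrak g}|_{\mathfrak h_i}=c_iB_{\mathfrak h_i}$ with $c_i\ne 0$. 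This yields $P|_{\mathfrak h_i}=c_i^{-1}I$, so $\mathfrak{R}|_{\mathfrak h_i}=\tfrac12(1-c_i^{-1})I=:\lambda_iI$. Thus $\mathfrak{R}$ is scalar on each summand; the block decomposition refines the eigenspace decomposition, with two blocks merging into a single eigenspace only if their $\lambda_i$ happen to coincide.

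Finally, for the curvature identities, $\operatorname{Ric}=\tfrac12 g$ gives $\operatorname{scal}=\tfrac12\dim_{\mathbb R}M$ immediately. Evaluating $\mathfrak{R}$ in the orthonormal basis $\{\tfrac{1}{\sqrt2}e_i\wedge e_j\}_{i<j}$ and using $g(\mathfrak{R}(e_i\wedge e_j),e_i\wedge e_j)=2\operatorname{sec}(e_i,e_j)$ together with $|e_i\wedge e_j|^2=2$ gives $\operatorname{tr}\mathfrak{R}=\sum_{i<j}\operatorname{sec}(e_i,e_j)=\tfrac12\operatorname{scal}$. Summing the eigenvalues above blockwise yields $\operatorname{tr}\mathfrak{R}=\tfrac12\dim\mathfrak h_0+\sum_{i\ge1}\lambda_i\dim_{\mathbb R}\mathfrak h_i$, and doubling produces the stated formula. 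I expect the main obstacle to be the orthogonality/proportionality step—invoking uniqueness of invariant forms on each simple factor and verifying that $P$ genuinely preserves the blocks—whereas the scalar-curvature bookkeeping is routine once the normalization constants $|e_i\wedge e_j|^2=2$ are tracked.
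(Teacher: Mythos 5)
Your proof is correct, and it reaches the conclusion by a genuinely different route for the key step. The paper's own proof is shorter and representation-theoretic: it observes that $\mathfrak R$ is $\mathrm G$-invariant with image $\mathfrak h \subset \bigwedge^2 T_pM$, and that the simple ideals $\mathfrak h_i$, $i \geq 1$, are pairwise inequivalent irreducible $\operatorname{Ad}(\mathrm H)$-modules, so Schur's lemma immediately yields $\mathfrak R|_{\mathfrak h_i} = \lambda_i I$ with no blockwise analysis of $P$ at all; the center is then handled exactly as you do, via the vanishing of $B_{\mathfrak h}$ on $\mathfrak h_0$, which forces $P|_{\mathfrak h_0} = 0$ and hence the eigenvalue $\frac12$ by Proposition \ref{prop:curv_op}. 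You instead never invoke equivariance of $\mathfrak R$: you prove by hand that the decomposition is orthogonal for both $B_{\mathfrak g}$ and $B_{\mathfrak h}$ (your invariance computation $B([a,b],v) = -B(b,[a,v])$ is valid for both forms, since $B_{\mathfrak g}$ is $\operatorname{ad}_{\mathfrak g}$-invariant and a fortiori $\operatorname{ad}_{\mathfrak h}$-invariant), deduce from nondegeneracy of $B_{\mathfrak g}$ that $P$ preserves each block, and then make $P$ scalar on each simple factor by uniqueness of invariant symmetric bilinear forms. This buys a more elementary and self-contained argument at the cost of extra bookkeeping, and it also isolates where the structure is used: note that uniqueness of invariant symmetric forms over $\mathbb R$ holds here because each $\mathfrak h_i$ is compact, hence absolutely simple (the adjoint representation is irreducible of real type); for a complex simple algebra viewed as a real algebra the space of invariant forms is two-dimensional, so compactness is doing real work in your proportionality step, whereas the paper's Schur argument sidesteps this by using inequivalence of the $\operatorname{Ad}(\mathrm H)$-modules directly. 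Two further points in your favor: your explicit remark that two blocks can share an eigenvalue (so the block decomposition only refines the eigenspace decomposition) is exactly the caveat the paper itself raises immediately after the proposition with $\mathrm{SO}(8)/(\mathrm{SO}(4)\times\mathrm{SO}(4))$; and your trace computation, including the normalization $\{\frac{1}{\sqrt 2}e_i\wedge e_j\}_{i<j}$ giving $\operatorname{tr}\mathfrak{R} = \frac12\operatorname{scal}$ and the vanishing of $\mathfrak R$ on the orthogonal complement of $\mathfrak h$ in $\bigwedge^2\mathfrak p$, fills in details the paper leaves implicit.
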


\begin{proof}
The curvature operator of $g$ is $\mathrm{G}$-invariant with image $\mathfrak{h}\subset\bigwedge^{2}T_{p}M$.
In the decomposition $\mathfrak{h}=\mathfrak{h}_0\oplus_{i}\mathfrak{h}_{i}$, each of the modules $\mathfrak h_i, i\geq1$ are pairwise inequivalent as $\operatorname{Ad}(\mathrm H)$ representations. Thus by Schur's lemma, $\mathfrak R |_{\mathfrak{h}_i}= \lambda_i I_{\mathfrak h_i}, i\geq 1$. The Killing form $B_{\mathfrak{h}}$ vanishes on the center $\mathfrak{h}_0$ . Therefore,  also $P$  vanishes on $\mathfrak{h}_0$ .  It then follows from proposition \ref{prop:curv_op} that $\mathfrak R |_{\mathfrak{h}_0}= \frac12 I_{\mathfrak h_0}$. This completes the proof.

\end{proof}

It can happen that $\lambda_i = \lambda_j$ for some $i\neq j$. The Wolf space $\mathrm D_{4}\mathrm I_4$, given by $ \mathrm{SO}\left(8\right)/\left(\mathrm{SO}\left(4\right)\times\mathrm{SO}\left(4\right)\right)$, has only one nonzero eigenvalue despite the fact that the holonomy
algebra has four irreducible factors. In contrast, the Wolf space $\mathrm G_2 I$ given by
$\mathrm{G}_{2}/\mathrm{SO}\left(4\right)$ has two distinct eigenvalues
(see Subsection \ref{subsec:Wolf-spaces}).

This formula can be used in the majority of cases as the holonomy is either simple or we are in a situation where there are two factors and
either the Hermitian or quaternionic structure dictates the value
of one of them. The Grassmannians are the only exceptions that are
not covered by this short-cut.

We finish this section by proving  Theorems \ref{thm:CurvOp} and \ref{thm:Hermitian} aside from observational data that appears in the tables (see Section \ref{sec:The-Tables}), where all the Poincar\'{e} polynomials and eigenvalues of the curvature operators of the compact irreducible symmetric spaces are listed.

\begin{proof}[Proof of Theorems \ref{thm:CurvOp} and \ref{thm:Hermitian}] By definition, $P$ is nonnegative. Moreover, since $\operatorname{ad}_Z|_{\mathfrak p}$ is skew-symmetric for all $Z \in \mathfrak h$, $B_{\mathfrak g}|_{\mathfrak h \times \mathfrak h} \leq B_{\mathfrak h}$, hence $P \leq I$. Consequently,
the eigenvalues for the curvature operator are between $0$ and $\frac12$. This proves the first claim in Theorem \ref{thm:CurvOp}. The remaining claims are covered by the calculations and tables in section \ref{sec:The-Tables}.

Moving on to Theorem \ref{thm:Hermitian}, recall that the equivalence of (2) - (4) is already known. By Proposition \ref{prop:curv_op}, the $\frac 12$ eigenspace of $\mathfrak R$ is precisely the kernel of $P$, which equals $\mathfrak h_0$, so (1) is equivalent to (2).
\end{proof}

The property of $\lambda_{\max}$ being no greater than the Einstein constant does not hold exclusively on symmetric spaces. Indeed, writing $\mathrm S^3 \times \mathrm S^3 = \mathrm{G}/\mathrm{H} = \mathrm{SU}(2)^3 / \Delta \mathrm{SU}(2)$, consider the standard homogeneous metric induced by the Killing form of $\mathrm{SU}(2)^3$. This is a nearly K\"ahler, Einstein metric with Einstein constant $\frac 5 {12}$. A reductive decomposition of $\mathfrak g$ is given by $\mathfrak g = \mathfrak h \oplus \mathfrak p$, where
\[
\mathfrak p = \{(X,Y,-X-Y) : X,Y \in \mathfrak{su}(2)\},
\]
and this splits orthogonally into irreducible summands as $\mathfrak p = \mathfrak p_1 \oplus \mathfrak p_2$, where
\[
\mathfrak p_1 := \{(X,-X,0):X \in \mathfrak{su}(2)\},\qquad \mathfrak p_2 := \{(X,X,-2X): X \in \mathfrak{su}(2)\}.
\]
As $\operatorname{Ad}(\mathrm H)$ representations, $\mathfrak p_1 = \mathfrak p_2 = \mathfrak{su}(2)$. The second wedge power now splits as
\[
\sideset{}{^{2}}\bigwedge \mathfrak p = \sideset{}{^{2}}\bigwedge \mathfrak p_1 \oplus \sideset{}{^{2}}\bigwedge \mathfrak p_2 \oplus \mathfrak p_1 \wedge \mathfrak p_2 = \mathfrak{su}(2) \oplus \mathfrak{su}(2) \oplus \mathfrak{su}(2) \oplus \operatorname{Sym}^2_0(\mathfrak{su}(2)) \oplus \mathbb R
\]
where we have identified $\bigwedge^2 \mathfrak p_i = \bigwedge^2 \mathfrak{su}(2) = \mathfrak{su}(2)$, and $\mathfrak p_1 \wedge \mathfrak p_2 = \mathfrak {su}(2) \otimes \mathfrak {su}(2) = \mathfrak{su}(2) \oplus \operatorname{Sym}^2_0(\mathfrak{su}(2)) \oplus \mathbb{R}$ as $\mathrm{SU}(2)$ representations. In this splitting $\mathfrak{R}$ takes the form
\[
\mathfrak R = \frac 1 {48}\begin{pmatrix}
    9&5&0&0&0\\ 5&9&0&0&0\\0&0&4&0&0\\0&0&0&-2&0\\0&0&0&0&4
\end{pmatrix},
\]
which has eigenvalues $\frac 7{24}$ with multiplicity $3$, $\frac 1{12}$ with multiplicity $7$, and $-\frac 1{24}$ with multiplicity $5$. Hence $\lambda_{\max}$ is bounded above by the Einstein constant.

It would be interesting to find similar examples of non-symmetric \emph{K\"ahler}--Einstein metrics where $\lambda_{\max}$ is the Einstein constant. By the computations in \cite{Yang94}, the K\"ahler--Einstein metric on the flag manifold $\mathrm{SU}(3)/\mathrm{T}^2$ has $\lambda_{\max}$ strictly greater than the Einstein constant.

Using a Bochner argument, we are able to determine the second Betti number under this curvature assumption.

\begin{thm} \label{thm:Bochner}
    Let $(M^n,g)$ be an Einstein manifold with $\operatorname{Ric} = \rho g$ and denote by $\lambda_1 \leq  \dots \leq \lambda_{\binom{n}{2}}$ the eigenvalues of the curvature operator.
    \begin{enumerate}
        \item If $\lambda_{\binom{n}{2}} < \rho$, then $b_2 = 0$.
        \item If $(M,g)$ is K\"{a}hler--Einstein and $\lambda_{\binom{n}{2} - 1} < \rho$, then $b_2 = 1$.
    \end{enumerate}
\end{thm}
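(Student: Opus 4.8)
The plan is to use the classical Bochner--Weitzenböck formula for harmonic $2$-forms. On a compact Riemannian manifold, Hodge theory identifies $b_2$ with the dimension of the space of harmonic $2$-forms, and the Weitzenböck formula for the Hodge Laplacian on $2$-forms reads
\[
\Delta_H \omega = \nabla^*\nabla \omega + \mathfrak{q}(R)\omega,
\]
where the curvature term $\mathfrak{q}(R)$ acting on $2$-forms can be expressed in terms of the Ricci tensor and the curvature operator $\mathfrak R$. The key point is that on a form $\omega \in \bigwedge^2 TM$, viewed via the metric as an element on which $\mathfrak R$ acts, the Weitzenböck curvature term takes the shape $\langle \mathfrak{q}(R)\omega,\omega\rangle = 2\operatorname{Ric}\text{-contribution} - 2\langle \mathfrak R \omega,\omega\rangle$, so that in the Einstein case $\operatorname{Ric} = \rho g$ one gets the pointwise identity
\[
\langle \mathfrak{q}(R)\omega, \omega \rangle = 2\rho |\omega|^2 - 2 \langle \mathfrak R \omega, \omega\rangle.
\]
I would first nail down this formula with the paper's sign and normalization conventions (recall $\mathfrak R$ is defined via $g(\mathfrak R(X\wedge Y), Z\wedge W) = 2R(X,Y,Z,W)$ and $|X\wedge Y|^2 = 2$ for orthonormal pairs), since the factors of $2$ are exactly what make the threshold come out to $\rho$ rather than $2\rho$ or $\rho/2$.

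For part (1), let $\omega$ be a harmonic $2$-form, so $\Delta_H\omega = 0$. Integrating the Weitzenböck formula over $M$ and using $\int \langle \nabla^*\nabla\omega,\omega\rangle = \int|\nabla\omega|^2 \geq 0$ gives
\[
0 = \int_M |\nabla\omega|^2 + \int_M \bigl(2\rho|\omega|^2 - 2\langle \mathfrak R\omega,\omega\rangle\bigr).
\]
If $\lambda_{\binom n2} < \rho$ then pointwise $\langle \mathfrak R\omega,\omega\rangle \leq \lambda_{\binom n2}|\omega|^2 < \rho|\omega|^2$ wherever $\omega \neq 0$, so the second integrand is strictly positive at any point where $\omega$ does not vanish. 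Combined with the nonnegative gradient term, the only way the total can be zero is $\omega \equiv 0$, forcing $b_2 = 0$. The main obstacle here is purely bookkeeping: confirming that the Weitzenböck curvature operator on $2$-forms really is $2\rho - 2\mathfrak R$ in these conventions, rather than relying on a half-remembered normalization.

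For part (2), the K\"{a}hler--Einstein hypothesis supplies a canonical harmonic $2$-form, namely the K\"{a}hler form $\omega_0$, which is parallel and hence harmonic, giving $b_2 \geq 1$. To show $b_2 \leq 1$ I would run the same integrated Weitzenböck argument, but now the top eigenvalue $\lambda_{\binom n2}$ is exactly $\rho$ and is realized by $\omega_0$ (indeed, on a K\"{a}hler--Einstein manifold the K\"{a}hler form is an eigenvector of $\mathfrak R$ with eigenvalue $\rho$, matching statement (1) of Theorem \ref{thm:Hermitian}). The hypothesis $\lambda_{\binom n2 - 1} < \rho$ says this top eigenvalue is simple, so the pointwise bound $\langle \mathfrak R\omega,\omega\rangle \leq \rho|\omega|^2$ is strict unless $\omega$ lies in the line spanned by $\omega_0$ at that point. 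Thus any harmonic $2$-form $\omega$ orthogonal to $\omega_0$ in $L^2$ must satisfy the strict inequality on a positive-measure set and therefore vanish identically, which yields $b_2 = 1$. The delicate step is the eigenvalue-gap argument: I need the pointwise spectral decomposition of $\mathfrak R$ to interact correctly with the global $L^2$-orthogonality, so I would decompose an arbitrary harmonic $\omega$ as a multiple of $\omega_0$ plus a part pointwise $\mathfrak R$-orthogonal to the top eigenline and argue that the latter is forced to zero by the strict gap.
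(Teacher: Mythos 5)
Your proposal is correct and takes essentially the same route as the paper: the paper runs the identical Bochner--Weitzenb\"ock argument, writing the curvature term as $2\operatorname{tr}(\mathfrak R \circ \phi^t \phi)$, and for item (2) likewise uses that the K\"ahler form is a $\rho$-eigenvector of $\mathfrak R$ together with the gap $\lambda_{\binom{n}{2}-1} < \rho$. The only cosmetic difference is that the paper reduces to \emph{primitive} (pointwise $\omega_0$-orthogonal) harmonic $2$-forms instead of $L^2$-orthogonal ones; your pointwise decomposition closes in the same way, since the integrated identity forces both the primitive part and $\nabla\omega$ to vanish, which makes the coefficient of $\omega_0$ constant and then zero by $L^2$-orthogonality.
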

\begin{proof}
    For both items, we use a Bochner argument. To that end, recall that for any harmonic $p$-form, it holds that
    \[ 0=g\left(\nabla^{*}\nabla\phi,\phi\right)+p\sum\mathrm{Ric}_{ij}\phi_{ii_{2}\cdots i_{p}}\phi_{ji_{2}\cdots i_{p}}-4\binom{p}{2}\sum_{i < j}\sum_{k < l} R_{ijkl}\phi_{iji_{3}\cdots i_{p}}\phi_{kli_{3}\cdots i_{p}}.
    \]
    We can think of $\phi \colon \bigwedge^2 TM \to \bigotimes^{p-2}TM$ as a map by setting $$\phi(e_i\wedge e_j) := 2\phi_{iji_3\dots i_p}e_{i_3} \otimes \dots \otimes e_{i_p}.$$
    The factor of $2$ ensures that $\operatorname{tr}(\phi ^t\phi) = |\phi|^2$. Then, using the definition of the curvature operator $g(\mathfrak R (e_i\wedge e_j),e_k \wedge e_l) = 2R_{ijkl}$ and the fact that $\{\frac{1}{\sqrt 2} e_i \wedge e_j\}_{i < j}$ is an orthonormal basis of $\bigwedge^2 TM$, we have
    \[
        4\binom{p}{2}\sum_{i < j}\sum_{k < l} R_{ijkl}\phi_{iji_{3}\cdots i_{p}}\phi_{kli_{3}\cdots i_{p}} = 2 \operatorname{tr}(\mathfrak R \circ \phi^t \circ \phi).
    \]
    Together with the Einstein condition this gives
    \[
        0 = \frac 1p g\left(\nabla^{*}\nabla\phi,\phi\right) + \rho |\phi|^2 - (p-1) \operatorname{tr}(\phi^t \phi \ \mathfrak R).
    \]
    Now beginning with item (1), if $p = 2$, and $\lambda_{\max} < \rho$, then since $\phi^t \circ \phi \geq 0$, we find that $g(\nabla^*\nabla \phi,\phi) \leq (\lambda_{\max} - \rho)|\phi|^2$, hence $\phi$ must vanish.

    For item (2), we recall that the K\"ahler form $\omega \in \bigwedge^2 TM$ is a harmonic $2$-form. It suffices to show that any harmonic $2$-form $\phi$ which is \emph{primitive} (i.e. $\phi(\omega) = 0)$ vanishes. Note that $\omega$ is itself an eigenvector of $\mathfrak R$, with eigenvalue $\rho$. Writing $\bigwedge^2 TM = \mathbb R \omega \oplus \mathfrak{su}(\frac n 2)$, we have for any primitive harmonic $2$-form $\phi$ that
    \[
    \operatorname{tr}(\phi^t \phi \ \mathfrak R) = \operatorname{tr}(\mathfrak{R}|_{\mathfrak{su}(\frac n2)} \circ \phi^t \phi) \leq \lambda_{{\binom{n}{2}} - 1}|\phi|^2,
    \]
    and the claim follows exactly as in item (1).
\end{proof}

\section{The Tables}\label{sec:The-Tables}

We now present the tables for the five classes of compact irreducible
symmetric spaces. Except for the Grassmannians each class has a special
formula that calculates the eigenvalues.

The notation for the symmetric spaces comes from Cartan's classification
of simple Lie groups and irreducible symmetric spaces. For the Lie
groups the notation is a Roman letter followed by a subscript as described
in the table above. For the symmetric spaces of type I there is in
addition a Roman numeral, e.g., $\mathrm{D}_{k}\mathrm{I}_{l}$, where
$\mathrm{D}_{k}$ refers to the transitive group acting on the space,
the Roman numeral is the subtype of the symmetric spaces with that
transitive action, and the final subscript is used to describe the
first factor of the isotropy for Grassmannians.

\subsection{The Groups or Type II Symmetric Spaces}\label{subsec:The-Groups}

These spaces consist of the compact simple Lie groups which as symmetric
spaces are written in the form $(\mathrm{G}\times\mathrm{G})/\Delta\mathrm{G}$,
where the isotropy is the diagonal subgroup. In particular, holonomy algebra is simple and there is only one nonzero eigenvalue
for the curvature operator. We additionally include the unitary groups
as they are more convenient to work with than the special unitary groups
when calculating the cohomology of the complex Grassmannians.

Since the isotropy is simple we have that 
\[
\frac{1}{2}\dim\mathrm{G}=2\lambda_{\mathfrak{g}}\dim\mathfrak{g}.
\]
Consequently 
\[
\lambda_{\mathfrak{g}}=\frac{1}{4}
\]
in all cases.

The Poincar\'{e} polynomials have been known for a long time (see \cite{Coleman1958}
for a history and elegant calculation of the relevant degrees). The
precise form used here was first used by Chevalley. These type II
examples are topologically characterized by the fact that $b_{3}\neq0$.
All other compact irreducible symmetric spaces with vanishing Euler
characteristic have $b_{3}=0$.

\newpage{}

\medskip{}

\noindent{}%
\noindent\begin{minipage}[t]{1\columnwidth}%
\begin{tabular}{|l|c|c|c|l|}
\hline 
Type/Space  & dim  & Eigenvalues/spaces  & $\chi_{G/H}\left(t\right)$  & $\chi$\tabularnewline
\hline 
\begin{cellvarwidth}[t]
\medskip{}

$\mathrm{A}_{n}$ : $\mathrm{SU}\left(n+1\right)$ 
\end{cellvarwidth} & \begin{cellvarwidth}[t]
\centering
 \medskip{}

$n\left(n+2\right)$ 
\end{cellvarwidth} & \begin{cellvarwidth}[t]
\medskip{}

$\lambda_{\mathfrak{g}}=\frac{1}{4}$ 
\end{cellvarwidth} & \begin{cellvarwidth}[t]
\centering
 \medskip{}

$\underset{k\in\left\{ 2,3,...,n+1\right\} }{\prod}\left(1+t^{2k-1}\right)$

\medskip{}
 
\end{cellvarwidth} & \begin{cellvarwidth}[t]
\medskip{}

0
\end{cellvarwidth}\tabularnewline
\hline 
\begin{cellvarwidth}[t]
\medskip{}

$\mathrm{U}\left(n\right)$ 
\end{cellvarwidth} & \begin{cellvarwidth}[t]
\centering
 \medskip{}

$n^{2}$ 
\end{cellvarwidth} & \begin{cellvarwidth}[t]
\medskip{}

$0_{\mathfrak{u}\left(1\right)}$,

$\lambda_{\mathfrak{su}\left(n\right)}=\frac{1}{4}$ 
\end{cellvarwidth} & \begin{cellvarwidth}[t]
\centering
 \medskip{}

$\underset{k\in\left\{ 1,2,...,n\right\} }{\prod}\left(1+t^{2k-1}\right)$

\medskip{}
 
\end{cellvarwidth} & \begin{cellvarwidth}[t]
\medskip{}

0
\end{cellvarwidth}\tabularnewline
\hline 
\begin{cellvarwidth}[t]
\medskip{}

$\mathrm{B}_{n}$ : $\mathrm{SO}\left(2n+1\right)$ 
\end{cellvarwidth} & \begin{cellvarwidth}[t]
\centering
 \medskip{}

$n\left(2n+1\right)$ 
\end{cellvarwidth} & \begin{cellvarwidth}[t]
\medskip{}

$\lambda_{\mathfrak{g}}=\frac{1}{4}$ 
\end{cellvarwidth} & \begin{cellvarwidth}[t]
\centering
 \medskip{}

$\underset{k\in\left\{ 2,4,...,2n\right\} }{\prod}\left(1+t^{2k-1}\right)$

\medskip{}
 
\end{cellvarwidth} & \begin{cellvarwidth}[t]
\medskip{}

0
\end{cellvarwidth}\tabularnewline
\hline  
\begin{cellvarwidth}[t]
\medskip{}

$\mathrm{C}_{n}$ : $\mathrm{Sp}\left(n\right)$ 
\end{cellvarwidth} & \begin{cellvarwidth}[t]
\centering
 \medskip{}

$n\left(2n+1\right)$ 
\end{cellvarwidth} & \begin{cellvarwidth}[t]
\medskip{}

$\lambda_{\mathfrak{g}}=\frac{1}{4}$ 
\end{cellvarwidth} & \begin{cellvarwidth}[t]
\centering
 \medskip{}

$\underset{k\in\left\{ 2,4,...,2n\right\} }{\prod}\left(1+t^{2k-1}\right)$

\medskip{}
 
\end{cellvarwidth} & \begin{cellvarwidth}[t]
\medskip{}

0
\end{cellvarwidth}\tabularnewline
\hline 
\begin{cellvarwidth}[t]
\medskip{}

$\mathrm{D}_{n}$ : $\mathrm{SO}\left(2n\right)$ 
\end{cellvarwidth} & \begin{cellvarwidth}[t]
\centering
 \medskip{}

$n\left(2n-1\right)$ 
\end{cellvarwidth} & \begin{cellvarwidth}[t]
\medskip{}

$\lambda_{\mathfrak{g}}=\frac{1}{4}$ 
\end{cellvarwidth} & \begin{cellvarwidth}[t]
\centering
 \medskip{}

$\underset{k\in\left\{ n,2,4,...,2\left(n-1\right)\right\} }{\prod}\left(1+t^{2k-1}\right)$

\medskip{}
 
\end{cellvarwidth} & \begin{cellvarwidth}[t]
\medskip{}

0
\end{cellvarwidth}\tabularnewline
\hline 
\begin{cellvarwidth}[t]
\medskip{}

$\mathrm{D}_{1}$ : $\mathrm{SO}\left(2\right)$ 

\medskip{}

\end{cellvarwidth} & \begin{cellvarwidth}[t]
\centering

 \medskip{}

$1$ 

\medskip{}

\end{cellvarwidth} & \begin{cellvarwidth}[t]
\medskip{}

$0_{\mathfrak{so}\left(2\right)}$ 

\medskip{}

\end{cellvarwidth} & \begin{cellvarwidth}[t]
\centering
 \medskip{}

$1+t$ 
\medskip{}
\end{cellvarwidth} & \begin{cellvarwidth}[t]
\medskip{}

0
\end{cellvarwidth}\tabularnewline
\hline 
\begin{cellvarwidth}[t]
\medskip{}

$\mathrm{E}_{6}$ 
\end{cellvarwidth} & \begin{cellvarwidth}[t]
\centering
 \medskip{}

78 
\end{cellvarwidth} & \begin{cellvarwidth}[t]
\medskip{}

$\lambda_{\mathfrak{g}}=\frac{1}{4}$ 
\end{cellvarwidth} & \begin{cellvarwidth}[t]
\centering
 \medskip{}

$\underset{k\in\left\{ 2,5,6,8,9,12\right\} }{\prod}\left(1+t^{2k-1}\right)$

\medskip{}
 
\end{cellvarwidth} & \begin{cellvarwidth}[t]
\medskip{}

0
\end{cellvarwidth}\tabularnewline
\hline 
\begin{cellvarwidth}[t]
\medskip{}

$\mathrm{E}_{7}$ 
\end{cellvarwidth} & \begin{cellvarwidth}[t]
\centering
 \medskip{}

133 
\end{cellvarwidth} & \begin{cellvarwidth}[t]
\medskip{}

$\lambda_{\mathfrak{g}}=\frac{1}{4}$ 
\end{cellvarwidth} & \begin{cellvarwidth}[t]
\centering
 \medskip{}

$\underset{k\in\left\{ 2,6,8,10,12,14,18\right\} }{\prod}\left(1+t^{2k-1}\right)$

\medskip{}
 
\end{cellvarwidth} & \begin{cellvarwidth}[t]
\medskip{}

0
\end{cellvarwidth}\tabularnewline
\hline 
\begin{cellvarwidth}[t]
\medskip{}

$\mathrm{E}_{8}$ 
\end{cellvarwidth} & \begin{cellvarwidth}[t]
\centering
 \medskip{}

248 
\end{cellvarwidth} & \begin{cellvarwidth}[t]
\medskip{}

$\lambda_{\mathfrak{g}}=\frac{1}{4}$ 
\end{cellvarwidth} & \begin{cellvarwidth}[t]
\centering
 \medskip{}

$\underset{k\in\left\{ 2,8,12,14,18,20,24,30\right\} }{\prod}\left(1+t^{2k-1}\right)$

\medskip{}
 
\end{cellvarwidth} & \begin{cellvarwidth}[t]
\medskip{}

0
\end{cellvarwidth}\tabularnewline
\hline 
\begin{cellvarwidth}[t]
\medskip{}

$\mathrm{F}_{4}$ 
\end{cellvarwidth} & \begin{cellvarwidth}[t]
\centering
 \medskip{}

52 
\end{cellvarwidth} & \begin{cellvarwidth}[t]
\medskip{}

$\lambda_{\mathfrak{g}}=\frac{1}{4}$ 
\end{cellvarwidth} & \begin{cellvarwidth}[t]
\centering
 \medskip{}

$\underset{k\in\left\{ 2,6,8,12\right\} }{\prod}\left(1+t^{2k-1}\right)$

\medskip{}
 
\end{cellvarwidth} & \begin{cellvarwidth}[t]
\medskip{}

0
\end{cellvarwidth}\tabularnewline
\hline 
\begin{cellvarwidth}[t]
\medskip{}

$\mathrm{G}_{2}$ 
\end{cellvarwidth} & \begin{cellvarwidth}[t]
\centering
 \medskip{}

14 
\end{cellvarwidth} & \begin{cellvarwidth}[t]
\medskip{}

$\lambda_{\mathfrak{g}}=\frac{1}{4}$ 
\end{cellvarwidth} & \begin{cellvarwidth}[t]
\centering
 \medskip{}

$\underset{k\in\left\{ 2,6\right\} }{\prod}\left(1+t^{2k-1}\right)$

\medskip{}
 
\end{cellvarwidth} &\begin{cellvarwidth}[t]
\centering
 \medskip{}
0

\end{cellvarwidth}\tabularnewline
\hline 
\end{tabular}%
\end{minipage}\newpage{}

\subsection{The Grassmannian Symmetric Spaces}\label{subsec:Grassmanian-Symmetric}

The spaces in this class consist of $p$ dimensional subspaces in
a $p+q$ dimensional vector module over $\mathbb{R}$, $\mathbb{C}$,
or $\mathbb{H}$. In the real case they are further divided into two
branches depending on the type of the isometry group, and when the
isometry group is $\mathrm{D}$ there is a further subdivision depending
on the type of the isotropy. The case where $p=1$ is of special interest
as they are the compact rank one symmetric spaces.

The curvature operator is calculated using the Killing forms which
are explicit and respect the inclusion $\mathfrak{h}\subset\mathfrak{g}$.

For $\mathfrak{g}$ we have that: 
\begin{eqnarray*}
B_{\mathfrak{su}\left(n\right)}\left(X,Y\right) & = & 2n\mathrm{tr}\left(XY\right),\\
B_{\mathfrak{sp}\left(n\right)}\left(X,Y\right) & = & 2\left(n+1\right)\mathrm{tr}\left(XY\right),\\
B_{\mathfrak{so}\left(n\right)}\left(X,Y\right) & = & \left(n-2\right)\mathrm{tr}\left(XY\right),
\end{eqnarray*}
and for $\mathfrak{h}$ 
\begin{eqnarray*}
B_{\mathfrak{u}\left(1\right)\oplus\mathfrak{su}\left(p\right)\oplus\mathfrak{su}\left(q\right)} & = & 0_{\mathfrak{u}\left(1\right)}\oplus B_{\mathfrak{su}\left(p\right)}\oplus B_{\mathfrak{su}\left(q\right)},\\
B_{\mathfrak{sp}\left(p\right)\oplus\mathfrak{sp}\left(q\right)}\left(X,Y\right) & = & B_{\mathfrak{sp}\left(p\right)}\oplus B_{\mathfrak{sp}\left(q\right)},\\
B_{\mathfrak{so}\left(p\right)\oplus\mathfrak{so}\left(q\right)}\left(X,Y\right) & = & B_{\mathfrak{so}\left(p\right)}\oplus B_{\mathfrak{so}\left(q\right)}.
\end{eqnarray*}
Thus $P$ in each of the three cases becomes: 
\begin{eqnarray*}
P & = & 0_{\mathfrak{u}\left(1\right)}\oplus\frac{p}{p+q}I_{\mathfrak{su}\left(p\right)}\oplus\frac{q}{p+q}I_{\mathfrak{su}\left(q\right)},\\
P & = & \frac{p+1}{p+q+1}I_{\mathfrak{sp}\left(p\right)}\oplus\frac{q+1}{p+q+1}I_{\mathfrak{sp}\left(q\right)},\\
P & = & \frac{p-2}{p+q-2}I_{\mathfrak{sp}\left(p\right)}\oplus\frac{q-2}{p+q-2}I_{\mathfrak{sp}\left(q\right).}
\end{eqnarray*}
This calculates the curvature operator 
\begin{eqnarray*}
\mathfrak{R} & = & \frac{1}{2}I_{\mathfrak{u}\left(1\right)}\oplus\frac{1}{2}\frac{q}{p+q}I_{\mathfrak{su}\left(p\right)}\oplus\frac{1}{2}\frac{p}{p+q}I_{\mathfrak{su}\left(q\right)},\\
\mathfrak{R} & = & \frac{1}{2}\frac{q}{p+q+1}I_{\mathfrak{sp}\left(p\right)}\oplus\frac{1}{2}\frac{p}{p+q+1}I_{\mathfrak{sp}\left(q\right)},\\
\mathfrak{R} & = & \frac{1}{2}\frac{q}{p+q-2}I_{\mathfrak{sp}\left(p\right)}\oplus\frac{1}{2}\frac{p}{p+q-2}I_{\mathfrak{sp}\left(q\right).}
\end{eqnarray*}

Alternately, one might argue, assuming $\mathfrak{h}_{0}=\mathfrak{u}\left(1\right)$,
that for symmetry reasons the ratio should be 
\[
\frac{\lambda_{\mathfrak{h}_{1}}}{\lambda_{\mathfrak{h}_{2}}}=\frac{q}{p}.
\]
This leads to the formulas 
\[
\frac{1}{2}\dim M=2\lambda_{\mathfrak{h}_{2}}\left(\frac{q}{p}\dim\mathfrak{h}_{1}+\dim\mathfrak{h}_{2}\right)
\]
and 
\[
\lambda_{\mathfrak{h}_{1}}=\frac{q}{p}\lambda_{\mathfrak{h}_{2}}.
\]

Borel's formula works in almost all cases without effort. We give
more details for the real Grassmannians as they have the most complex
behavior. When using $\mathbb{Z}_2$ however the real Grassmannians have formulas that are similar to those of complex and quaternionic Grassmannians (see \cite{Ozawa2022} for formulas that also use quantum binomials).
We start with the simplest case, including even dimensional spheres
($l=0$), where the ranks are equal: 
\[
\frac{\mathrm{SO}\left(2k+2l+1\right)}{\mathrm{SO}\left(2k\right)\times\mathrm{SO}\left(2l+1\right)}.
\]
The relevant degrees are: 
\begin{eqnarray*}
D_{\mathrm{SO}\left(2k+2l+1\right)} & = & \left\{ 2,4,...,2k+2l\right\} ,\\
D_{\mathrm{SO}\left(2k\right)} & = & \left\{ k,2,4,...,2\left(k-1\right)\right\} ,\\
D_{\mathrm{SO}\left(2l+1\right)} & = & \left\{ 2,4,...,2l\right\} 
\end{eqnarray*}
leading to 
\begin{eqnarray*}
\chi\left(t\right) & = & \frac{\left[2\right]_{t^{2}}\left[4\right]_{t^{2}}\cdots\left[2k+2l\right]_{t^{2}}}{\left[k\right]_{t^{2}}\left[2\right]_{t^{2}}\left[4\right]_{t^{2}}\cdots\left[2\left(k-1\right)\right]_{t^{2}}\left[2\right]_{t^{2}}\left[4\right]_{t^{2}}\cdots\left[2l\right]_{t^{2}}}\\
 & = & \frac{\left[2k\right]_{t^{2}}\left[1\right]_{t^{4}}\left[2\right]_{t^{4}}\cdots\left[k+l\right]_{t^{4}}}{\left[k\right]_{t^{2}}\left[1\right]_{t^{4}}\left[2\right]_{t^{4}}\cdots\left[k\right]_{t^{4}}\left[1\right]_{t^{4}}\left[2\right]_{t^{4}}\cdots\left[l\right]_{t^{4}}}\\
 & = & \binom{k+l}{k}_{t^{4}}\left[2\right]_{t^{2k}}.
\end{eqnarray*}

The other family with equal ranks is 
\[
\frac{\mathrm{SO}\left(2k+2l\right)}{\mathrm{SO}\left(2k\right)\times\mathrm{SO}\left(2l\right)}
\]
and can be calculated in a similar fashion. One annoying feature
is that the rational expression is not in a form that is obviously a polynomial.

In the final family that includes the odd dimensional spheres: 
\[
\frac{\mathrm{SO}\left(2k+2l+2\right)}{\mathrm{SO}\left(2k+1\right)\times\mathrm{SO}\left(2l+1\right)}
\]
the ranks are not equal. The degrees are given by 
\begin{eqnarray*}
D_{\mathrm{SO}\left(2k+2l+2\right)} & = & \left\{ k+l+1,2,4,...,2k+2l\right\} ,\\
D_{\mathrm{SO}\left(2k+1\right)} & = & \left\{ 2,4,...,2k\right\} ,\\
D_{\mathrm{SO}\left(2l+1\right)} & = & \left\{ 2,4,...,2l\right\} .
\end{eqnarray*}
The degree $k+l+1$ corresponds to the invariant polynomial that has
to vanish under restriction to $\mathrm{SO}\left(2k+1\right)\times\mathrm{SO}\left(2l+1\right)$.
The remaining terms form a Gaussian binomial as in the first case.

There are several special cases that are of interest. When $p=2$
we obtain Hermitian symmetric spaces and their Poincar\'{e} polynomials
are calculated in Subsection \ref{subsec:Hermitian-Symmetric}. When
$p=4$ they become Wolf or quaternion symmetric spaces and are covered
in Subsection \ref{subsec:Wolf-spaces}. Here we consider $p=3$ which
is a case that might be of independent interest.

First, the case where Borel's formula works: 
\[
\frac{\mathrm{SO}\left(3+2l\right)}{\mathrm{SO}\left(3\right)\times\mathrm{SO}\left(2l\right)}.
\]
Here 
\begin{eqnarray*}
\chi\left(t\right) & = & \binom{1+l}{1}_{t^{4}}\left[2\right]_{t^{2l}}\\
 & = & \left[2\right]_{t^{2l}}\left[l+1\right]_{t^{4}}.
\end{eqnarray*}

The other case is 
\[
\frac{\mathrm{SO}\left(3+2l+1\right)}{\mathrm{SO}\left(3\right)\times\mathrm{SO}\left(2l+1\right)}
\]
where 
\begin{eqnarray*}
\chi\left(t\right) & = & \binom{1+l}{1}_{t^{4}}\left(1+t^{2+2l+1}\right)\\
 & = & \left[1+l\right]_{t^{4}}\left(1+t^{2l+3}\right).
\end{eqnarray*}

\newpage{}

\medskip{}

\noindent{}%
\noindent\begin{minipage}[t]{1\columnwidth}%
\begin{tabular}{|l|c|c|c|c|}
\hline 
Type/Space  & dim  & Eigenvalues/spaces  & $\chi_{G/H}\left(t\right)$  & $\chi$\tabularnewline
\hline 
\begin{cellvarwidth}[t]
\medskip{}

$\mathrm{A}_{n}\mathrm{III}_{1}$: $\mathbb{CP}^{n}$ 
\end{cellvarwidth} & \begin{cellvarwidth}[t]
\medskip{}

$2n$ 
\end{cellvarwidth} & \begin{cellvarwidth}[t]
\medskip{}

$\lambda_{\mathfrak{u}\left(1\right)}=\frac{1}{2}$,

$\lambda_{\mathfrak{su}\left(n\right)}=\frac{1}{2}\frac{n}{n+1}$

\medskip{}
 
\end{cellvarwidth} & \begin{cellvarwidth}[t]
\medskip{}

$\left[n+1\right]_{t^{2}}$ 
\end{cellvarwidth} & \begin{cellvarwidth}[t]
\medskip{}

$n+1$
\end{cellvarwidth}\tabularnewline
\hline 
\begin{cellvarwidth}[t]
\medskip{}

$\mathrm{A}_{p+q-1}\mathrm{III}_{p}$: $\frac{\mathrm{U}\left(p+q\right)}{\mathrm{U}\left(p\right)\times\mathrm{U}\left(q\right)}$,

where $2\leq p\leq q$. 
\end{cellvarwidth} & \begin{cellvarwidth}[t]
\medskip{}

$2pq$ 
\end{cellvarwidth} & \begin{cellvarwidth}[t]
\medskip{}

$\lambda_{\mathfrak{u}\left(1\right)}=\frac{1}{2}$,

$\lambda_{\mathfrak{su}\left(p\right)}=\frac{1}{2}\frac{q}{p+q}$,

$\lambda_{\mathfrak{su}\left(q\right)}=\frac{1}{2}\frac{p}{p+q}$

\medskip{}
 
\end{cellvarwidth} & \begin{cellvarwidth}[t]
\medskip{}

$\binom{p+q}{p}_{t^{2}}$ 
\end{cellvarwidth} & \begin{cellvarwidth}[t]
\medskip{}

$\binom{p+q}{p}$
\end{cellvarwidth}\tabularnewline
\hline 
\begin{cellvarwidth}[t]
\medskip{}

$\mathrm{C}_{1+n}\mathrm{II}_{1}$: $\mathbb{HP}^{n}$ 
\end{cellvarwidth} & \begin{cellvarwidth}[t]
\medskip{}

$4n$ 
\end{cellvarwidth} & \begin{cellvarwidth}[t]
\medskip{}

$\lambda_{\mathfrak{sp}\left(1\right)}=\frac{1}{2}\frac{n}{n+2}$,

$\lambda_{\mathfrak{sp}\left(n\right)}=\frac{1}{2}\frac{1}{n+2}$

\medskip{}
 
\end{cellvarwidth} & \begin{cellvarwidth}[t]
\medskip{}

$\left[n+1\right]_{t^{4}}$ 
\end{cellvarwidth} & \begin{cellvarwidth}[t]
\medskip{}

$n+1$
\end{cellvarwidth}\tabularnewline
\hline 
\begin{cellvarwidth}[t]
\medskip{}

$\mathrm{C}_{p+q}\mathrm{II}_{p}$: $\frac{\mathrm{Sp}\left(p+q\right)}{\mathrm{Sp}\left(p\right)\times\mathrm{Sp}\left(q\right)}$,

where $2\leq p\leq q$. 
\end{cellvarwidth} & \begin{cellvarwidth}[t]
\medskip{}

$4pq$ 
\end{cellvarwidth} & \begin{cellvarwidth}[t]
\medskip{}

$\lambda_{\mathfrak{sp}\left(p\right)}=\frac{1}{2}\frac{q}{p+q+1}$,

$\lambda_{\mathfrak{sp}\left(q\right)}=\frac{1}{2}\frac{p}{p+q+1}$

\medskip{}
 
\end{cellvarwidth} & \begin{cellvarwidth}[t]
\medskip{}

$\binom{p+q}{p}_{t^{4}}$ 
\end{cellvarwidth} & \begin{cellvarwidth}[t]
\medskip{}

$\binom{p+q}{p}$
\end{cellvarwidth}\tabularnewline
\hline 
\begin{cellvarwidth}[t]
\medskip{}

$S^{n}=\frac{\mathrm{SO}\left(n+1\right)}{\mathrm{SO}\left(n\right)}$ 
\end{cellvarwidth} & \begin{cellvarwidth}[t]
\medskip{}

$n$ 
\end{cellvarwidth} & \begin{cellvarwidth}[t]
\medskip{}

$\lambda_{\mathfrak{so}\left(n\right)}=\frac{1}{2}\frac{1}{n-1}$ 
\end{cellvarwidth} & \begin{cellvarwidth}[t]
\medskip{}

$1+t^{n}$ 
\end{cellvarwidth} & \begin{cellvarwidth}[t]
\medskip{}

$1+\left(-1\right)^{n}$
\end{cellvarwidth}\tabularnewline
\hline 
\begin{cellvarwidth}[t]
\medskip{}

$\mathrm{BDI}$: $\frac{\mathrm{SO}\left(p+q\right)}{\mathrm{SO}\left(p\right)\times\mathrm{SO}\left(q\right)}$,

where $2\leq p\leq q$. 
\end{cellvarwidth} & \begin{cellvarwidth}[t]
\medskip{}

$pq$ 
\end{cellvarwidth} & \begin{cellvarwidth}[t]
\medskip{}

$\lambda_{\mathfrak{so}(p)}=\frac{1}{2}\frac{q}{p+q-2}$,

$\lambda_{\mathfrak{so}(q)}=\frac{1}{2}\frac{p}{p+q-2}$

\medskip{}
 
\end{cellvarwidth} &  & \tabularnewline
\hline 
\begin{cellvarwidth}[t]
\medskip{}

$\mathrm{B}_{k+l}\mathrm{I}_{2k}$, 
\end{cellvarwidth} & \begin{cellvarwidth}[t]
\medskip{}

$2k\left(2l+1\right)$ 
\end{cellvarwidth} & \begin{cellvarwidth}[t]
\medskip{}

$\lambda_{\mathfrak{so}(2k)}=\frac{1}{2}\frac{2l+1}{2k+2l-1}$,

$\lambda_{\mathfrak{so}(2l+1)}=\frac{1}{2}\frac{2k}{2k+2l-1}$

\medskip{}
 
\end{cellvarwidth} & \begin{cellvarwidth}[t]
\medskip{}

$\binom{k+l}{k}_{t^{4}}\left[2\right]_{t^{2k}}$ 
\end{cellvarwidth} & \begin{cellvarwidth}[t]
\medskip{}

$2\binom{k+l}{k}$
\end{cellvarwidth}\tabularnewline
\hline 
\begin{cellvarwidth}[t]
\medskip{}

$\mathrm{D}_{k+l+1}\mathrm{I}_{2k+1}$ 
\end{cellvarwidth} & \begin{cellvarwidth}[t]
\medskip{}

$\left(2k+1\right)\left(2l+1\right)$ 
\end{cellvarwidth} & \begin{cellvarwidth}[t]
\medskip{}

$\lambda_{\mathfrak{so}(2k+1)}=\frac{1}{2}\frac{2l+1}{2k+2l}$,

$\lambda_{\mathfrak{so}(2l+1)}=\frac{1}{2}\frac{2k+1}{2k+2l}$

\medskip{}
 
\end{cellvarwidth} & \begin{cellvarwidth}[t]
\medskip{}

$\left(1+t^{2k+2l+1}\right)$

$\times\binom{k+l}{k}_{t^{4}}$ 
\end{cellvarwidth} & \begin{cellvarwidth}[t]
\medskip{}

0
\end{cellvarwidth}\tabularnewline
\hline 
\begin{cellvarwidth}[t]
\medskip{}

$\mathrm{D}_{k+l}\mathrm{I}_{2k}$ 
\end{cellvarwidth} & \begin{cellvarwidth}[t]
\medskip{}

$4kl$ 
\end{cellvarwidth} & \begin{cellvarwidth}[t]
\medskip{}
$\lambda_{\mathfrak{so}(2k)}=\frac{1}{2}\frac{l}{k+l-1}$

$\lambda_{\mathfrak{so}(2l)}=\frac{1}{2}\frac{k}{k+l-1}$,

\medskip{}
 
\end{cellvarwidth} & \begin{cellvarwidth}[t]
\medskip{}

$\frac{\left[2\right]_{t^{2k}}\left[2\right]_{t^{2l}}}{\left[2\right]_{t^{2k+2l}}}$

$\times\binom{k+l}{k}_{t^{4}}$ 
\end{cellvarwidth} & \begin{cellvarwidth}[t]
\medskip{}

$2\binom{k+l}{k}$
\end{cellvarwidth}\tabularnewline
\hline 
\end{tabular}%
\end{minipage}\newpage{}

\subsection{The Symmetric Spaces with Simple Isotropy:}\label{subsec:Simple-isotropy}

Aside from Lie groups there are several symmetric spaces that have
simple isotropy group. In this case we quickly obtain a generalization
of the formula for Lie groups: 
\[
\lambda_{\mathfrak{h}}=\frac{1}{4}\frac{\dim M}{\dim\mathfrak{h}}.
\]
Unlike for Lie groups this eigenvalue is never $1$/4 as $\dim M\neq\dim\mathfrak{h}$
in all other cases. We include one exception, $\mathrm{SU}\left(4\right)/\mathrm{SO}\left(4\right)$, as it belong to a family where all other spaces have simple isotropy group. Here the holonomy is not simple but the formula still holds and can be recalculated using Killing forms.

This class also contains the most difficult to calculate Poincar\'{e}
polynomials as all the remaining cases with unequal rank appear here.
Borel calculated some of these cases and Takeuchi calculated the remaining
ones with his formula from above. One notable feature of the Betti
numbers is that $b_{2}=b_{3}=b_{4}=0$ in all cases. 

There are four situations where the ranks are not equal, with one needing a further subdivision. We will illustrate
two of these here including the case with a subdivision.

We start with: 
\[
\frac{\mathrm{SU}\left(2m+1\right)}{\mathrm{SO}\left(2m+1\right)},\,m\geq1,
\]
where 
\begin{eqnarray*}
D_{\mathrm{SU}\left(2m+1\right)} & = & \left\{ 2,3,...,2m+1\right\} ,\\
D_{\mathrm{SO}\left(2m+1\right)} & = & \left\{ 2,4,...,2m\right\} .
\end{eqnarray*}
Here all the polynomials with odd degrees in $D_{\mathrm{SU}\left(2m-1\right)}$
will vanish when restricted. At the same time all of the even degrees
match precisely with the even degrees in $D_{\mathrm{SO}\left(2m-1\right)}$
so we obtain 
\[
\chi\left(t\right)=\left(1+t^{5}\right)\left(1+t^{9}\right)\cdots\left(1+t^{4m+1}\right).
\]

It is worth noting that $\frac{\mathrm{SU}\left(1\right)}{\mathrm{SO}\left(1\right)}=S^{3}$
is not included, but is a noteworthy example, also $\frac{\mathrm{SU}\left(3\right)}{\mathrm{SO}\left(3\right)}$
is of interest as it is a rational 5-sphere. However, the long exact
homotopy sequence shows that $\pi_{2}\left(\frac{\mathrm{SU}\left(3\right)}{\mathrm{SO}\left(3\right)}\right)=\mathbb{Z}_{2}$
so it is not the 5-sphere. It is the only example of an irreducible
symmetric space that is a rational homology sphere without being a
sphere (see also \cite{Wolf1969} for the complete classification).

In the other situation: 
\[
\frac{\mathrm{SU}\left(2m\right)}{\mathrm{SO}\left(2m\right)},\,m\geq2,
\]
we have 
\begin{eqnarray*}
D_{\mathrm{SU}\left(2m\right)} & = & \left\{ 2,3,...,2m\right\} ,\\
D_{\mathrm{SO}\left(2m\right)} & = & \left\{ m,2,4,...,2\left(m-1\right)\right\} .
\end{eqnarray*}
Here again all of the polynomials with odd degrees in $D_{\mathrm{SU}\left(2m\right)}$
vanish when restricted. This again leaves all of the even degrees
and all but $2m$ will be matched in $D_{\mathrm{SO}\left(2m\right)}$.
Here the polynomial of degree $m$ in $D_{\mathrm{SU}\left(2m\right)}$
could potentially be matched with the degree $m$ from $D_{\mathrm{SO}\left(2m\right)}$,
but this doesn't happen as the degree $m$ polynomial for $\mathrm{SO}\left(2m\right)$
is of a different type related to the horizontal flip in the Dynkin
diagram. We obtain 
\begin{eqnarray*}
\chi\left(t\right) & = & \frac{\left[2m\right]_{t^{2}}}{\left[m\right]_{t^{2}}}\left(1+t^{5}\right)\left(1+t^{9}\right)\cdots\left(1+t^{4m-3}\right)\\
 & = & \left[2\right]_{t^{2m}}\left(1+t^{5}\right)\left(1+t^{9}\right)\cdots\left(1+t^{4m-3}\right).
\end{eqnarray*}

Finally, we consider 
\[
\frac{\mathrm{E}_{6}}{\mathrm{Sp}\left(4\right)/\left\{ \pm I\right\} }
\]
with 
\begin{eqnarray*}
D_{\mathrm{E}_{6}} & = & \left\{ 2,5,6,8,9,12\right\} ,\\
D_{\mathrm{Sp}\left(4\right)} & = & \left\{ 2,4,6,8\right\} .
\end{eqnarray*}
Here again the odd degrees for $\mathrm{E}_{6}$ correspond to polynomials
that vanish when restricted so we get: 
\begin{eqnarray*}
\chi\left(t\right) & = & \frac{\left[12\right]_{t^{2}}}{\left[4\right]_{t^{2}}}\left(1+t^{9}\right)\left(1+t^{17}\right)\\
 & = & \left[3\right]_{t^{8}}\left(1+t^{9}\right)\left(1+t^{17}\right).
\end{eqnarray*}

\newpage{}

\medskip{}

\noindent{}%
\noindent\begin{minipage}[t]{1\columnwidth}%
\begin{tabular}{|l|c|c|c|c|}
\hline 
Type/Space  & dim  & Eigenvalues/spaces  & $\chi_{G/H}\left(t\right)$  & $\chi$\tabularnewline
\hline 
\begin{cellvarwidth}[t]
\medskip{}

$\mathrm{A}_{n-1}\mathrm{I}$: $\frac{\mathrm{SU}\left(n\right)}{\mathrm{SO}\left(n\right)}$,
$n\geq3$.

\medskip{}
 
\end{cellvarwidth} & \begin{cellvarwidth}[t]
\centering
 \medskip{}

$\frac{\left(n-1\right)\left(n+2\right)}{2}$ 
\end{cellvarwidth} & \begin{cellvarwidth}[t]
\medskip{}

$\lambda_{\mathfrak{so}\left(n\right)}=\frac{1}{4}\frac{n+2}{n}$ 
\end{cellvarwidth} &  & \tabularnewline
\hline 
\begin{cellvarwidth}[t]
\medskip{}

$\mathrm{A}_{2m-1}\mathrm{I}$

\medskip{}
 
\end{cellvarwidth} & \begin{cellvarwidth}[t]
\centering
 \medskip{}

$\left(2m-1\right)\left(m+1\right)$ 
\end{cellvarwidth} & \begin{cellvarwidth}[t]
\medskip{}

$\lambda_{\mathfrak{so}\left(2m\right)}=\frac{1}{4}\frac{m+1}{m}$ 
\end{cellvarwidth} & \begin{cellvarwidth}[t]
\centering
 \medskip{}

$\left(1+t^{2m}\right)$

$\times\prod_{i=1}^{m-1}\left(1+t^{4i+1}\right)$

\medskip{}
 
\end{cellvarwidth} & \begin{cellvarwidth}[t]
\centering
 \medskip{}

0 
\end{cellvarwidth}\tabularnewline
\hline 
\begin{cellvarwidth}[t]
\medskip{}

$\mathrm{A}_{2m}\mathrm{I}$

\medskip{}
 
\end{cellvarwidth} & \begin{cellvarwidth}[t]
\centering
 \medskip{}

$\left(m-1\right)\left(2m+1\right)$ 
\end{cellvarwidth} & \begin{cellvarwidth}[t]
\medskip{}

$\lambda_{\mathfrak{so}\left(2m+1\right)}=\frac{1}{4}\frac{2m+3}{2m+1}$ 
\end{cellvarwidth} & \begin{cellvarwidth}[t]
\centering
 \medskip{}

$\prod_{i=1}^{m-1}\left(1+t^{4i+1}\right)$ 
\end{cellvarwidth} & \begin{cellvarwidth}[t]
\centering
 \medskip{}

0 
\end{cellvarwidth}\tabularnewline
\hline 
\begin{cellvarwidth}[t]
\medskip{}

$\mathrm{A}_{2n-1}\mathrm{II}$: $\frac{\mathrm{SU}\left(2n\right)}{\mathrm{Sp}\left(n\right)}$,
$n\geq2$.

\medskip{}
 
\end{cellvarwidth} & \begin{cellvarwidth}[t]
\centering
 \medskip{}

$\left(n-1\right)\left(2n+1\right)$ 
\end{cellvarwidth} & \begin{cellvarwidth}[t]
\medskip{}

$\lambda_{\mathfrak{sp}\left(n\right)}=\frac{1}{2}\frac{n-1}{2n}$ 
\end{cellvarwidth} & \begin{cellvarwidth}[t]
\centering
 \medskip{}

$\prod_{i=1}^{n-1}\left(1+t^{4i+1}\right)$ 
\end{cellvarwidth} & \begin{cellvarwidth}[t]
\centering
 \medskip{}

0 
\end{cellvarwidth}\tabularnewline
\hline 
\begin{cellvarwidth}[t]
\medskip{}

$\mathrm{E}_{6}\mathrm{I}$: $\frac{\mathrm{E}_{6}}{\mathrm{Sp}\left(4\right)/\left\{ \pm I\right\} }$

\medskip{}
 
\end{cellvarwidth} & \begin{cellvarwidth}[t]
\centering
 \medskip{}

42 
\end{cellvarwidth} & \begin{cellvarwidth}[t]
\medskip{}

$\lambda_{\mathfrak{sp}\left(4\right)}=\frac{7}{24}$ 
\end{cellvarwidth} & \begin{cellvarwidth}[t]
\centering
 \medskip{}

$\left[3\right]_{t^{8}}\left(1+t^{9}\right)\left(1+t^{17}\right)$ 
\end{cellvarwidth} & \begin{cellvarwidth}[t]
\centering
 \medskip{}

0 
\end{cellvarwidth}\tabularnewline
\hline 
\begin{cellvarwidth}[t]
\medskip{}

$\mathrm{E}_{6}\mathrm{IV}$: $\frac{\mathrm{E}_{6}}{\mathrm{F}_{4}}$

\medskip{}
 
\end{cellvarwidth} & \begin{cellvarwidth}[t]
\centering
 \medskip{}

26 
\end{cellvarwidth} & \begin{cellvarwidth}[t]
\medskip{}

$\lambda_{\mathfrak{f}_{4}}=\frac{1}{8}$ 
\end{cellvarwidth} & \begin{cellvarwidth}[t]
\centering
 \medskip{}

$\left(1+t^{9}\right)\left(1+t^{17}\right)$ 
\end{cellvarwidth} & \begin{cellvarwidth}[t]
\centering
 \medskip{}

0 
\end{cellvarwidth}\tabularnewline
\hline 
\begin{cellvarwidth}[t]
\medskip{}

$\mathrm{E}_{7}\mathrm{V}$: $\frac{\mathrm{E}_{7}}{\mathrm{SU}\left(8\right)/\left\{ \pm I\right\} }$

\medskip{}
 
\end{cellvarwidth} & \begin{cellvarwidth}[t]
\centering
 \medskip{}

70 
\end{cellvarwidth} & \begin{cellvarwidth}[t]
\medskip{}

$\lambda_{\mathfrak{su}\left(8\right)}=\frac{5}{18}$ 
\end{cellvarwidth} & \begin{cellvarwidth}[t]
\centering
 \medskip{}

$\left[6\right]_{t^{6}}\left[3\right]_{t^{8}}\left[2\right]_{t^{10}}\left[2\right]_{t^{14}}$ 
\end{cellvarwidth} & \begin{cellvarwidth}[t]
\centering
 \medskip{}

72 
\end{cellvarwidth}\tabularnewline
\hline 
\begin{cellvarwidth}[t]
\medskip{}

$\mathrm{E}_{8}\mathrm{VIII}$: $\frac{\mathrm{E}_{8}}{\mathrm{Spin}\left(16\right)/\left\{ \pm\mathrm{vol}\right\} }$

\medskip{}
 
\end{cellvarwidth} & \begin{cellvarwidth}[t]
\centering
 \medskip{}

128 
\end{cellvarwidth} & \begin{cellvarwidth}[t]
\medskip{}

$\lambda_{\mathfrak{so}\left(16\right)}=\frac{4}{15}$ 
\end{cellvarwidth} & \begin{cellvarwidth}[t]
\centering
 \medskip{}

$\left[5\right]_{t^{8}}\left[3\right]_{t^{12}}\left[3\right]_{t^{16}}\left[3\right]_{t^{20}}$ 
\end{cellvarwidth} & \begin{cellvarwidth}[t]
\centering
 \medskip{}

135 
\end{cellvarwidth}\tabularnewline
\hline 
\begin{cellvarwidth}[t]
\medskip{}

$\mathrm{F}_{4}\mathrm{II}$: $\frac{\mathrm{F}_{4}}{\mathrm{Spin}\left(9\right)}=\mathbb{OP}^{2}$

\medskip{}
 
\end{cellvarwidth} & \begin{cellvarwidth}[t]
\centering
 \medskip{}

16 
\end{cellvarwidth} & \begin{cellvarwidth}[t]
\medskip{}

$\lambda_{\mathfrak{so}\left(9\right)}=\frac{1}{9}$ 
\end{cellvarwidth} & \begin{cellvarwidth}[t]
\centering
 \medskip{}

$\left[3\right]_{t^{8}}$ 
\end{cellvarwidth} & \begin{cellvarwidth}[t]
\centering
 \medskip{}

3 
\end{cellvarwidth}\tabularnewline
\hline 
\end{tabular}%
\end{minipage}\newpage{}

\subsection{The Hermitian Symmetric Spaces}\label{subsec:Hermitian-Symmetric}

These spaces all have a 1-dimensional center
for the holonomy which is responsible for the maximal possible eigenvalue
$\frac12$. Aside from the complex Grassmannians there is only one additional
factor. These Hermitian spaces have
\begin{eqnarray*}
\lambda_{\mathfrak{u}\left(1\right)}=\lambda _{\mathfrak{h}_0} & = & \frac{1}{2},\\
\lambda_{\mathfrak{h}_{1}} & = & \frac{1}{4}\cdot\frac{\dim_{\mathbb R}M-2}{\dim\mathfrak{h}_{1}}.
\end{eqnarray*}
Some noteworthy observations are that these spaces all have the maximal
possible eigenvalue $\frac12$ and with multiplicity 1. This is mirrored
in the fact that $b_{2}=1$ for Hermitian symmetric spaces and $b_{2}=0$
for all other compact irreducible symmetric spaces.  
We note that four examples:
\[\mathbb{CP}^{2}, \frac{\mathrm{SO}\left(8\right)}{\mathrm{SO}\left(2\right)\times\mathrm{SO}\left(6\right)}, \frac{\mathrm{E}_{6}}{\left(\mathrm{SO}\left(2\right)\times\mathrm{Spin}\left(10\right)\right)/\mathbb{Z}_{4}},\frac{\mathrm{E}_{7}}{\left(\mathrm{SO}\left(2\right)\times\mathrm{E}_{6}\right)/\mathbb{Z}_{3}}\]
have the same pair of eigenvalues $\frac 1   2, \frac{1}{6}$ but with distinct multiplicities for $\frac{1}{6}$.

In all cases Borel's formula can be used to
calculate the Poincar\'{e} polynomials. We feature the complex quadrics
as they have some interesting cohomological properties.

First the odd dimensional complex quadric: 
\[
\frac{\mathrm{SO}\left(2+2m+1\right)}{\mathrm{SO}\left(2\right)\times\mathrm{SO}\left(2m+1\right)}
\]
with 
\begin{eqnarray*}
\chi\left(t\right) & = & \binom{1+m}{1}_{t^{4}}\left[2\right]_{t^{2}}=\left[m+1\right]_{t^{4}}\left[2\right]_{t^{2}}=\left[2m+2\right]_{t^{2}}.
\end{eqnarray*}
It is noteworthy that this shows that this space has the rational
cohomology of $\mathbb{C}\mathbb{P}^{2m+1}$.

Next the even dimensional complex quadric which has an extra cohomology
class in the middle dimension:

\[
\frac{\mathrm{SO}\left(2+2m\right)}{\mathrm{SO}\left(2\right)\times\mathrm{SO}\left(2m\right)}
\]
with 
\[
\chi\left(t\right)=\binom{1+m}{1}_{t^{4}}\frac{\left[2\right]_{t^{2}}\left[2\right]_{t^{2m}}}{\left[2\right]_{t^{2+2m}}}=\frac{\left[2m+2\right]_{t^{2}}}{\left[2\right]_{t^{2+2m}}}\left[2\right]_{t^{2m}}=\left[m+1\right]_{t^{2}}\left[2\right]_{t^{2m}}.
\]

\newpage{}

\medskip{}

\noindent{}%
\noindent\begin{minipage}[t]{1\columnwidth}%
\begin{tabular}{|l|c|c|c|c|}
\hline 
Type/Space  & dim  & Eigenvalues/spaces  & $\chi_{G/H}\left(t\right)$  & $\chi$\tabularnewline
\hline 
\begin{cellvarwidth}[t]
\medskip{}

$\mathrm{A}_{n}\mathrm{III}_{1}$: $\mathbb{CP}^{n}$ 
\end{cellvarwidth} & \begin{cellvarwidth}[t]
\centering
 \medskip{}

$2n$ 
\end{cellvarwidth} & \begin{cellvarwidth}[t]
\medskip{}

$\lambda_{\mathfrak{u}\left(1\right)}=\frac{1}{2}$,

$\lambda_{\mathfrak{su}\left(n\right)}=\frac{1}{2}\frac{1}{n+1}$

\medskip{}
 
\end{cellvarwidth} & \begin{cellvarwidth}[t]
\centering
 \medskip{}

$\left[n+1\right]_{t^{2}}$ 
\end{cellvarwidth} & \begin{cellvarwidth}[t]
\centering
 \medskip{}

$n+1$ 
\end{cellvarwidth}\tabularnewline
\hline 
\begin{cellvarwidth}[t]
\medskip{}

$\mathrm{A}_{p+q-1}\mathrm{III}_{p}$: $\frac{\mathrm{U}\left(p+q\right)}{\mathrm{U}\left(p\right)\times\mathrm{U}\left(q\right)}$

where $2\leq p\leq q.$ 
\end{cellvarwidth} & \begin{cellvarwidth}[t]
\centering
 \medskip{}

$2pq$ 
\end{cellvarwidth} & \begin{cellvarwidth}[t]
\medskip{}

$\lambda_{\mathfrak{u}\left(1\right)}=\frac{1}{2}$,

$\lambda_{\mathfrak{su}\left(p\right)}=\frac{1}{2}\frac{q}{p+q}$,

$\lambda_{\mathfrak{su}\left(q\right)}=\frac{1}{2}\frac{p}{p+q}$

\medskip{}
 
\end{cellvarwidth} & \begin{cellvarwidth}[t]
\centering
 \medskip{}

$\binom{p+q}{p}_{t^{2}}$ 
\end{cellvarwidth} & \begin{cellvarwidth}[t]
\centering
 \medskip{}

$\binom{p+q}{p}$ 
\end{cellvarwidth}\tabularnewline
\hline 
\begin{cellvarwidth}[t]
\medskip{}

$\mathrm{BDI}$: $\frac{\mathrm{SO}\left(2+n\right)}{\mathrm{SO}\left(2\right)\times\mathrm{SO}\left(n\right)}$,

where $n\geq2.$ 
\end{cellvarwidth} & \begin{cellvarwidth}[t]
\centering
 \medskip{}

$2n$ 
\end{cellvarwidth} & \begin{cellvarwidth}[t]
\medskip{}

$\lambda_{\mathfrak{so}(2)}=\frac{1}{2}$,

$\lambda_{\mathfrak{so}(n)}=\frac{1}{n}$

\medskip{}
 
\end{cellvarwidth} &  & \tabularnewline
\hline 
\begin{cellvarwidth}[t]
\medskip{}

$\mathrm{B}_{1+m}\mathrm{I}_{2}$ 
\end{cellvarwidth} & \begin{cellvarwidth}[t]
\centering
 \medskip{}

$2\left(2m+1\right)$ 
\end{cellvarwidth} & \begin{cellvarwidth}[t]
\medskip{}

$\lambda_{\mathfrak{so}(2)}=\frac{1}{2}$,

$\lambda_{\mathfrak{so}(2m+1)}=\frac{1}{2m+1}$

\medskip{}
 
\end{cellvarwidth} & \begin{cellvarwidth}[t]
\centering
 \medskip{}

$\left[2m+2\right]_{t^{2}}$ 
\end{cellvarwidth} & \begin{cellvarwidth}[t]
\centering
 \medskip{}

$2m+2$ 
\end{cellvarwidth}\tabularnewline
\hline 
\begin{cellvarwidth}[t]
\medskip{}

$\mathrm{D}_{1+m}\mathrm{I}_{2}$ 
\end{cellvarwidth} & \begin{cellvarwidth}[t]
\centering
 \medskip{}

$4m$ 
\end{cellvarwidth} & \begin{cellvarwidth}[t]
\medskip{}

$\lambda_{\mathfrak{so}(2)}=\frac{1}{2}$,

$\lambda_{\mathfrak{so}(2m)}=\frac{1}{2m}$

\medskip{}
 
\end{cellvarwidth} & \begin{cellvarwidth}[t]
\centering
 \medskip{}

$\left[2\right]_{t^{2m}}\left[m+1\right]_{t^{2}}$ 
\end{cellvarwidth} & \begin{cellvarwidth}[t]
\centering
 \medskip{}

$2m+2$ 
\end{cellvarwidth}\tabularnewline
\hline 
\begin{cellvarwidth}[t]
\medskip{}

$\mathrm{C}_{n}\mathrm{I}$: $\frac{\mathrm{Sp}\left(n\right)}{\mathrm{U}\left(n\right)}$,
$n\geq1$. 
\end{cellvarwidth} & \begin{cellvarwidth}[t]
\centering
 \medskip{}

$n\left(n+1\right)$ 
\end{cellvarwidth} & \begin{cellvarwidth}[t]
\medskip{}

$\lambda_{\mathfrak{u}\left(1\right)}=\frac{1}{2}$,

$\lambda_{\mathfrak{su}\left(n\right)}= \frac{1}{4}\frac{n+2}{n+1}$ 

\medskip{}
 
\end{cellvarwidth} & \begin{cellvarwidth}[t]
\centering
 \medskip{}

$\prod_{i=1}^{n}\left[2\right]_{t^{2i}}$ 
\end{cellvarwidth} & \begin{cellvarwidth}[t]
\centering
 \medskip{}

$2^{n}$ 
\end{cellvarwidth}\tabularnewline
\hline 
\begin{cellvarwidth}[t]
\medskip{}

$\mathrm{D}_{n}\mathrm{III}$: $\frac{\mathrm{SO}\left(2n\right)}{\mathrm{U}\left(n\right)}$,
$n\geq2$. 
\end{cellvarwidth} & \begin{cellvarwidth}[t]
\centering
 \medskip{}

$n\left(n-1\right)$ 
\end{cellvarwidth} & \begin{cellvarwidth}[t]
\medskip{}

$\lambda_{\mathfrak{u}\left(1\right)}=\frac{1}{2}$,

$\lambda_{\mathfrak{su}\left(n\right)}= \frac 14 \frac{n-2}{n-1}$ 

\medskip{}
 
\end{cellvarwidth} & \begin{cellvarwidth}[t]
\centering
 \medskip{}

$\prod_{i=1}^{n-1}\left[2\right]_{t^{2i}}$ 
\end{cellvarwidth} & \begin{cellvarwidth}[t]
\centering
 \medskip{}

$2^{n-1}$ 
\end{cellvarwidth}\tabularnewline
\hline 
\begin{cellvarwidth}[t]
\medskip{}

$\mathrm{E}_{6}\mathrm{III}$:

\medskip{}

$\frac{\mathrm{E}_{6}}{\left(\mathrm{SO}\left(2\right)\times\mathrm{Spin}\left(10\right)\right)/\mathbb{Z}_{4}}$

\medskip{}
 
\end{cellvarwidth} & \begin{cellvarwidth}[t]
\centering
 \medskip{}

$32$ 
\end{cellvarwidth} & \begin{cellvarwidth}[t]
\medskip{}

$\lambda_{\mathfrak{so}\left(2\right)}=\frac{1}{2}$,

$\lambda_{\mathfrak{so}\left(10\right)}=\frac{1}{6}$ 
\end{cellvarwidth} & \begin{cellvarwidth}[t]
\centering
 \medskip{}

$\left[9\right]_{t^{2}}\left[3\right]_{t^{8}}$ 
\end{cellvarwidth} & \begin{cellvarwidth}[t]
\centering
 \medskip{}

27 
\end{cellvarwidth}\tabularnewline
\hline 
\begin{cellvarwidth}[t]
\medskip{}

$\mathrm{E}_{7}\mathrm{VII}$: $\frac{\mathrm{E}_{7}}{\left(\mathrm{SO}\left(2\right)\times\mathrm{E}_{6}\right)/\mathbb{Z}_{3}}$

\medskip{}
 
\end{cellvarwidth} & \begin{cellvarwidth}[t]
\centering
 \medskip{}

54 
\end{cellvarwidth} & \begin{cellvarwidth}[t]
\medskip{}

$\lambda_{\mathfrak{so}\left(2\right)}=\frac{1}{2}$,

$\lambda_{\mathfrak{e}_6}=\frac{1}{6}$ 
\end{cellvarwidth} & \begin{cellvarwidth}[t]
\centering
 \medskip{}

$\left[14\right]_{t^{2}}\left[2\right]_{t^{10}}\left[2\right]_{t^{18}}$ 
\end{cellvarwidth} & \begin{cellvarwidth}[t]
\centering
 \medskip{}

56 
\end{cellvarwidth}\tabularnewline
\hline 
\end{tabular}%
\end{minipage}

\newpage{}

\subsection{The Wolf Spaces}\label{subsec:Wolf-spaces}

These spaces were first classified by Joe Wolf (see \cite{Wolf1965}). They all have a three dimensional simple factor in the decomposition
of the holonomy with corresponding eigenvalue that is dictated by
the quaternionic structure and can be calculated on the quaternionic
projective space of the same dimension. Aside from Grassmannians 
there will only be one extra factor. For those Wolf spaces of
dimension $4n$ we have 
\begin{eqnarray*}
\lambda_{\mathfrak{sp}\left(1\right)} & = & \frac{1}{2}\cdot\frac{n}{n+2},\\
\lambda_{\mathfrak{h}_{2}} & = & \frac{1}{2}\cdot\frac{n}{n+2}\cdot\frac{2n+1}{\dim\mathfrak{h}_{2}}.
\end{eqnarray*}
It is tempting to think that the maximal eigenvalue is always $\frac{1}{2}\cdot\frac{n}{n+2}$
with multiplicity 3. This, however, fails in several ways. First of
all, there are several Wolf spaces which are also Hermitian symmetric
spaces, specifically: 
\[
\frac{\mathrm{SO}\left(6\right)}{\mathrm{SO}\left(4\right)\times\mathrm{SO}\left(2\right)}\,\,\mathrm{and}\,\,\frac{\mathrm{U}\left(2+n\right)}{\mathrm{U}\left(2\right)\times\mathrm{U}\left(n\right)}.\]

Next there is 
\[
\frac{\mathrm{SO}\left(8\right)}{\mathrm{SO}\left(4\right)\times\mathrm{SO}\left(4\right)}
\]
which only has one eigenvalue. Finally 
\[
\frac{\mathrm{G}_{2}}{\mathrm{SO}\left(4\right)}
\]
has two eigenvalues: $1/4$ with multiplicity 3 as expected, but the
other eigenvalue is $5/12$ also with multiplicity 3. 

Wolf spaces
all have $b_{4}\neq0$ but this does not characterize them as most Grassmannians
also have $b_{4}\neq0$. Note that the 9-dimensional example
\[
\frac{\mathrm{SU}\left(4\right)}{\mathrm{SO}\left(4\right)}
\]with Poincar\'{e} polynomial
\[\chi(t)=(1+t^4)(1+t^5) \]
and eigenvalue
\[\lambda = \frac{6}{16}=\frac{1}{2}\frac{6}{6+2}\]
is not a Wolf space despite having holonomy and an eigenvalue indicating that it could have a quaternionic structure.

In all cases Borel's formula can be used to calculate the Poincar\'{e}
polynomials. We point out that there are special things that happen
with the real Grassmannians of four planes. We start with the ``odd'' dimensional case: 
\[
\frac{\mathrm{SO}\left(4+2l+1\right)}{\mathrm{SO}\left(4\right)\times\mathrm{SO}\left(2l+1\right)}
\]
where 
\[
\chi\left(t\right)=\binom{2+l}{2}_{t^{4}}\left[2\right]_{t^{4}}=\left[l+1\right]_{t^{4}}\left[l+2\right]_{t^{4}}.
\]

The other case is a little less straightforward and shows some of
the complications with the formulas for real Grassmannians:

\[
\frac{\mathrm{SO}\left(4+2l\right)}{\mathrm{SO}\left(4\right)\times\mathrm{SO}\left(2l\right)}
\]
where 
\begin{eqnarray*}
\chi\left(t\right) & = & \binom{2+l}{2}_{t^{4}}\frac{\left[2\right]_{t^{4}}\left[2\right]_{t^{2l}}}{\left[2\right]_{t^{4+2l}}}=\left[l+1\right]_{t^{4}}\left[l+2\right]_{t^{4}}\frac{\left[2\right]_{t^{2l}}}{\left[2\right]_{t^{4+2l}}}\\
 & = & \left[l+1\right]_{t^{4}}\left[l+2\right]_{t^{4}}\frac{\left[2\right]_{t^{2l}}}{\left[2\right]_{t^{4+2l}}}\\
 & = & \left[l+1\right]_{t^{4}}\left(\frac{\left[2\right]_{t^{2l}}}{1-t^{4}}\frac{1-t^{4l+8}}{1+t^{2l+4}}\right)\\
 & = & \left[l+1\right]_{t^{4}}\left(\frac{1+t^{2l}}{1-t^{4}}\left(1-t^{2l+4}\right)\right)\\
 & = & \left[l+1\right]_{t^{4}}\left(\frac{1+t^{2l}-t^{2l+4}-t^{4l+4}}{1-t^{4}}\right)\\
 & = & \left[l+1\right]_{t^{4}}\left(\left[l+1\right]_{t^{4}}+t^{2l}\right).
\end{eqnarray*}

\newpage{}

\medskip{}

\noindent{}%
\noindent\begin{minipage}[t]{1\columnwidth}%
\begin{tabular}{|l|c|c|c|c|}
\hline 
Type/Space  & dim  & Eigenvalues/spaces  & $\chi_{G/H}\left(t\right)$  & $\chi$\tabularnewline
\hline 
\begin{cellvarwidth}[t]
\medskip{}

$\mathrm{C}_{1+n}\mathrm{II}_{1}$: $\mathbb{HP}^{n}$ 
\end{cellvarwidth} & \begin{cellvarwidth}[t]
\centering
 \medskip{}

$4n$ 
\end{cellvarwidth} & \begin{cellvarwidth}[t]
\medskip{}
 $\lambda_{\mathfrak{sp}\left(1\right)}=\frac{1}{2}\frac{n}{n+2}$,

$\lambda_{\mathfrak{sp}\left(n\right)}=\frac{1}{2}\frac{1}{n+2}$

 
\end{cellvarwidth} & \begin{cellvarwidth}[t]
\centering
 \medskip{}

$\left[n+1\right]_{t^{4}}$ 
\end{cellvarwidth} & \begin{cellvarwidth}[t]
\centering
 \medskip{}

$n+1$ 
\end{cellvarwidth}\tabularnewline
\hline 
\begin{cellvarwidth}[t]
\medskip{}

$\mathrm{A}_{n+1}\mathrm{III}_{2}$: $\frac{\mathrm{U}\left(2+n\right)}{\mathrm{U}\left(2\right)\times\mathrm{U}\left(n\right)}$,

where $n\geq2$. 
\end{cellvarwidth} & \begin{cellvarwidth}[t]
\centering
 \medskip{}

$4n$ 
\end{cellvarwidth} & \begin{cellvarwidth}[t]
\medskip{}
 $\lambda_{\mathfrak{u}\left(1\right)}=\frac{1}{2}$,

$\lambda_{\mathfrak{su}\left(2\right)}=\frac{1}{2}\frac{n}{n+2}$,

$\lambda_{\mathfrak{su}\left(n\right)}=\frac{1}{2}\frac{2}{n+2}$

 
\end{cellvarwidth} & \begin{cellvarwidth}[t]
\centering
 \medskip{}

$\frac{\left[n+1\right]_{t^{2}}\left[n+2\right]_{t^{2}}}{\left[2\right]_{t^{2}}}$ 
\end{cellvarwidth} & \begin{cellvarwidth}[t]
\centering
 \medskip{}

$\frac{\left(n+1\right)\left(n+2\right)}{2}$ 
\end{cellvarwidth}\tabularnewline
\hline 
\begin{cellvarwidth}[t]
\medskip{}

$\mathrm{A}_{2m+1}\mathrm{III}_{2}$ 
\end{cellvarwidth} & \begin{cellvarwidth}[t]
\centering
 \medskip{}

$8m$ 
\end{cellvarwidth} & \begin{cellvarwidth}[t]
\medskip{}



 
\end{cellvarwidth} & \begin{cellvarwidth}[t]
\centering
 \medskip{}

$\left[2m+1\right]_{t^{2}}\left[m+1\right]_{t^{4}}$ 
\end{cellvarwidth} & \begin{cellvarwidth}[t]
\centering
 \medskip{}

$\left(2m+1\right)$

$\times\left(m+1\right)$ 
\end{cellvarwidth}\tabularnewline
\hline 
\begin{cellvarwidth}[t]
\medskip{}

$\mathrm{A}_{2m+2}\mathrm{III}_{2}$ 
\end{cellvarwidth} & \begin{cellvarwidth}[t]
\centering
 \medskip{}

$4\left(2m+1\right)$ 
\end{cellvarwidth} & \begin{cellvarwidth}[t]
\medskip{}

\end{cellvarwidth} & \begin{cellvarwidth}[t]
\centering
 \medskip{}

$\left[m+1\right]_{t^{4}}\left[2m+3\right]_{t^{2}}$ 
\end{cellvarwidth} & \begin{cellvarwidth}[t]
\centering
 \medskip{}

$\left(m+2\right)$

$\times\left(2m+3\right)$ 
\end{cellvarwidth}\tabularnewline
\hline 
\begin{cellvarwidth}[t]
\medskip{}

$\mathrm{BDI}$: $\frac{\mathrm{SO}\left(4+n\right)}{\mathrm{SO}\left(4\right)\times\mathrm{SO}\left(n\right)}$,

where $n\geq2$. 
\end{cellvarwidth} & \begin{cellvarwidth}[t]
\centering
 \medskip{}

$4n$ 
\end{cellvarwidth} & \begin{cellvarwidth}[t]
\medskip{}
 $\lambda_{\mathfrak{so}(4)}=\frac{1}{2}\frac{n}{n+2}$,

$\lambda_{\mathfrak{so}(n)}=\frac{2}{n+2}$

 
\end{cellvarwidth} &  & \tabularnewline
\hline 

\medskip{}

 $\mathrm{B}_{2+l}\mathrm{I}_{4}$  & \begin{cellvarwidth}[t]
\centering
 \medskip{}

$4\left(2l+1\right)$ 
\end{cellvarwidth} & \begin{cellvarwidth}[t]
\medskip{}
 $\lambda_{\mathfrak{so}(4)}=\frac{1}{2}\frac{2l+1}{2l+3}$,

$\lambda_{\mathfrak{so}(2l+1)}=\frac{2}{2l+3}$

 
\end{cellvarwidth} & \begin{cellvarwidth}[t]
\centering
 \medskip{}

$\left[l+1\right]_{t^{4}}\left[l+2\right]_{t^{4}}$ 
\end{cellvarwidth} & \begin{cellvarwidth}[t]
\centering
 \medskip{}

$\left(l+1\right)$

$\times\left(l+2\right)$ 
\end{cellvarwidth}\tabularnewline
\hline 

\medskip{}

 $\mathrm{D}_{2+l}\mathrm{I}_{4}$  & \begin{cellvarwidth}[t]
\centering
 \medskip{}

$8l$ 
\end{cellvarwidth} & \begin{cellvarwidth}[t]
\medskip{}
 $\lambda_{\mathfrak{so}(4)}=\frac{1}{2}\frac{l}{l+1}$,

$\lambda_{\mathfrak{so}(2l)}=\frac{1}{l+1}$

 
\end{cellvarwidth} & \begin{cellvarwidth}[t]
\centering
 \medskip{}

$\left[l+1\right]_{t^{4}}$

$\times\left(t^{2l}+\left[l+1\right]_{t^{4}}\right)$ 
\end{cellvarwidth} & \begin{cellvarwidth}[t]
\centering
 \medskip{}

$\left(l+1\right)$

$\times\left(l+2\right)$ 
\end{cellvarwidth}\tabularnewline
\hline 
\begin{cellvarwidth}[t]
\medskip{}

$\mathrm{E}_{6}\mathrm{II}$: $\frac{\mathrm{E}_{6}}{\left(\mathrm{SU}\left(2\right)\times\mathrm{SU}\left(6\right)\right)/\mathbb{Z}_{2}}$ 
\end{cellvarwidth} & \begin{cellvarwidth}[t]
\centering
 \medskip{}

$40$ 
\end{cellvarwidth} & \begin{cellvarwidth}[t]
\medskip{}
 $\lambda_{\mathfrak{su}\left(2\right)}=\frac{10}{24}$,

$\lambda_{\mathfrak{su}\left(6\right)}=\frac{1}{4}$

 
\end{cellvarwidth} & \begin{cellvarwidth}[t]
\centering
 \medskip{}

$\left[6\right]_{t^{4}}\left[3\right]_{t^{6}}\left[2\right]_{t^{8}}$ 
\end{cellvarwidth} & \begin{cellvarwidth}[t]
\centering
 \medskip{}

$36$ 
\end{cellvarwidth}\tabularnewline
\hline 
\begin{cellvarwidth}[t]
\medskip{}

$\mathrm{E}_{7}\mathrm{VI}$: $\frac{\mathrm{E}_{7}}{\left(\mathrm{
SU}\left(2\right)\times\mathrm{Spin}\left(12\right)\right)/\mathbb{Z}_{2}}$ 
\end{cellvarwidth} & \begin{cellvarwidth}[t]
\centering
 \medskip{}

$64$ 
\end{cellvarwidth} & \begin{cellvarwidth}[t]
\medskip{}

$\lambda_{\mathfrak{su}\left(2\right)}=\frac{16}{36}$,

$\lambda_{\mathfrak{so}\left(12\right)}=\frac{2}{9}$

 
\end{cellvarwidth} & \begin{cellvarwidth}[t]
\centering
 \medskip{}

$\left[7\right]_{t^{4}}\left[3\right]_{t^{8}}\left[3\right]_{t^{12}}$ 
\end{cellvarwidth} & \begin{cellvarwidth}[t]
\centering
 \medskip{}

$63$ 
\end{cellvarwidth}\tabularnewline
\hline 
\begin{cellvarwidth}[t]
\medskip{}

$\mathrm{E}_{8}\mathrm{IX}$: $\frac{\mathrm{E}_{8}}{\left(\mathrm{SU}\left(2\right)\times\mathrm{E}_{7}\right)/\mathbb{Z}_{2}}$ 
\end{cellvarwidth} & \begin{cellvarwidth}[t]
\centering
 \medskip{}

$112$ 
\end{cellvarwidth} & \begin{cellvarwidth}[t]
\medskip{}

$\lambda_{\mathfrak{su}\left(2\right)}=\frac{28}{60}$,

$\lambda_{\mathfrak{e}_{7}}=\frac{1}{5}$

 
\end{cellvarwidth} & \begin{cellvarwidth}[t]
\centering
 \medskip{}

$\left[15\right]_{t^{4}}\left[4\right]_{t^{12}}\left[2\right]_{t^{20}}$ 
\end{cellvarwidth} & \begin{cellvarwidth}[t]
\centering
 \medskip{}

$120$ 
\end{cellvarwidth}\tabularnewline
\hline 
\begin{cellvarwidth}[t]
\medskip{}

$\mathrm{F}_{4}\mathrm{I}$: $\frac{\mathrm{F}_{4}}{\left(\mathrm{SU}\left(2\right)\times\mathrm{Sp}\left(3\right)\right)/\mathbb{Z}_{2}}$ 
\end{cellvarwidth} & \begin{cellvarwidth}[t]
\centering
 \medskip{}

$28$ 
\end{cellvarwidth} & \begin{cellvarwidth}[t]
\medskip{}

$\lambda_{\mathfrak{su}\left(2\right)}=\frac{7}{18}$,

$\lambda_{\mathfrak{sp}\left(3\right)}=\frac{5}{18}$

 
\end{cellvarwidth} & \begin{cellvarwidth}[t]
\centering
 \medskip{}

$\left[6\right]_{t^{4}}\left[2\right]_{t^{8}}$ 
\end{cellvarwidth} & \begin{cellvarwidth}[t]
\centering
 \medskip{}

$12$ 
\end{cellvarwidth}\tabularnewline
\hline 
\begin{cellvarwidth}[t]
\medskip{}

$\mathrm{G}_{2}\mathrm{I}$: $\frac{\mathrm{G}_{2}}{\mathrm{SO}\left(4\right)}$ 
\end{cellvarwidth} & \begin{cellvarwidth}[t]
\centering
 \medskip{}

$8$ 
\end{cellvarwidth} & \begin{cellvarwidth}[t]
\medskip{}

$\lambda_{\mathfrak{su}\left(2\right)}=\frac{1}{4}$,

$\lambda_{\mathfrak{su}\left(2\right)}=\frac{5}{12}$

 
\end{cellvarwidth} & \begin{cellvarwidth}[t]
\centering
 \medskip{}

$\left[3\right]_{t^{4}}$ 
\end{cellvarwidth} & \begin{cellvarwidth}[t]
\centering
 \medskip{}

$3$ 
\end{cellvarwidth}\tabularnewline
\hline 
\end{tabular}%
\end{minipage}

\bibliographystyle{abbrv}

\end{document}